\newtheorem{thm}{Theorem}[section]
\newtheorem{lemma}[thm]{Lemma}
\newtheorem{cor}[thm]{Corollary}
\newtheorem{quest}[thm]{Question}
\theoremstyle{definition}
\newtheorem{definition}[thm]{Definition}
\newcommand{\re}{r.e.\ }
\newcommand{\ie}{i.e.\ }
\newcommand{\eg}{e.g.\ }
\numberwithin{equation}{section}
\begin{document}
\title{Non-Splittings of Speedable Sets}

\author{Ellen S. Chih}
\address{Department of Mathematics, University of California, Berkeley, 737 Evans Hall \#3840, Berkeley, CA 94720-3840 USA}
\email{echih@math.berkeley.edu}

\thanks{The author would like to thank Leo Harrington and Theodore Slaman for their helpful comments, insightful discussions, corrections, numerous helpful advice and for carefully reading through a draft of this paper and for their suggestions on presentation and improving clarity. The author would also like to thank Rod Downey for bringing the problem to attention and an anonymous referee for useful suggestions on an earlier version of the paper. The author was partially supported by the National Science Foundation under grant number DMS-1301659.}
\begin{abstract}
We construct a speedable set that cannot be split into speedable sets. This solves a question of B\"{a}uerle and Remmel.
\end{abstract}

\maketitle

\section{Introduction}
According to Blum and Marques \cite{MR0332455}, ``[a]n important goal of complexity theory $\ldots$ is to characterize those partial recursive functions and recursively enumerable sets having some given complexity properties, and to do so in terms which do not involve the notion of complexity." Blum \cite{Blum1967} opened fruitful avenues in this direction when he constructed a $\{0,1\}$-valued total recursive function with arbitrarily large speed-up.

Blum and Marques \cite{MR0332455} expanded the notion of speedability to recursively enumerable (r.e.) sets. An \re set $A$ is \emph{speedable} if for every recursive function, there exists a program enumerating membership in $A$ faster, by the desired recursive factor, on infinitely many integers. Thus, an \re set is nonspeedable if there is an almost everywhere (a.e.) fastest program for it, modulo a recursive factor. Subsequently, Soare \cite{soare77} gave an ``information theoretic" characterization of speedable sets in terms of a well-studied class of \re sets. He proved that a set $A$ is speedable if and only if $A$ is not \emph{semi-low}, namely,
\[\{e:\overline{A} \cap W_e \neq \emptyset \} \nleq_T \emptyset^\prime \]
where $W_e$ denotes the $e^{th}$ \re set.

A \emph{splitting} of an \re set $A$ is a pair of disjoint \re sets $X, Y$ whose union is $A$. Given a property $P$ of a set $A$ (\eg being high, nonrecursive or speedable), it is a natural question to ask whether we can split the set $A$ into a pair $X$, $Y$ such that both $X$ and $Y$ have property $P$. If the property is finitely based or can be made finitely based, it seems that we are able to split the set in a way that still preserves the property. For example, the Friedberg Splitting Theorem \cite{MR0109125} asserts that every nonrecursive set can be split into two nonrecursive sets. Another example is the Sacks Splitting Theorem \cite{MR0146078}, which implies that every nonrecursive set can be split into two nonrecursive sets that are also Turing incomparable. If the property is not finitely based (\eg being high or speedable), there can be obstructions. For example, Lachlan's Nonsplitting Theorem \cite{MR0409150} shows that the Sacks Splitting and Density Theorems cannot be combined.

While working with degrees of bases of \re vector spaces in 1992, B\"{a}uerle and Remmel \cite{MR1274285} raised several questions on the splittings of \re sets. In particular, they asked whether every high \re set can be split into two high \re sets and whether every speedable set can be split into two speedable sets. The first question (on high sets) was negatively answered by Downey and Shore \cite{DS1998}. The second question is the main topic of this paper.

\begin{quest} \cite{MR1274285}
Can every speedable set be split into two speedable sets?
\end{quest}

The answer was thought to be positive for some time. In 1993, Downey, Jockusch, Lerman and Stob \cite{DS1993} proved that every hyper-hyper-simple set can be split into speedable sets, contributing evidence for a positive answer. In 1999, the question was thought to be resolved when Jahn \cite{Jahn1999} published a proof that every speedable set could be split into two speedable sets. His paper was cited \cite{O2004} as a positive case for other splittings of sets with related complexity properties and cited again \cite{CG2003} for introducing various complexity properties and splittings. However, Downey [private communication] pointed out that the proof in \cite{Jahn1999} is incorrect.

The main goal of this paper is to construct a speedable set that cannot be split into speedable sets (\ie to negatively answer Question 1.1). The proof is by a tree construction but there are infinite positive requirements to the left of the true path. In most tree constructions, nodes to the left of the true path are guessing a $\Pi_2$ outcome that is seen to be false so the action to the left settles down and becomes finite. However, in our construction, ``settling down" means an infinite positive $\Pi_1$ action, namely, all $x$ (of some particular type) get put into $B$ quickly.

The first section of the paper sets notation and introduces the main theorem of the paper. The second section examines the case where we are dealing with only one split and the third examines the case where we are dealing with two splits. The proof of the main theorem is given in the fourth section. In the final section, we examine various ways the main theorem could be improved.

\section{Notation and splittings}
Our notation follows \cite{soare}. We use the following conventions.

Unless specified otherwise, all sets are assumed to be recursively enumerable (r.e.). Fix the universal standard enumeration of \re sets as defined in \cite{soare}. Let $W_i$ denote the $i^{th}$ \re set in this enumeration. Let $W_{i,s}$ denote the subset of $W_i$ as enumerated at the end of stage $s$. Let $\Phi_i(x)$ denote the stage $s$ when $x$ enters $W_i$ (\ie the least $s$ such that $x \in W_{i,s}$). We sometimes refer to $\Phi_i$ as $\Phi_V$ where $V = W_i$. For $\alpha, \beta$ that are nodes on our priority tree $T = \Lambda^{<\omega}$, we say that $\alpha$ is to the left of $\beta$, written $\alpha <_L \beta$ if there is some $\gamma \in T$ and $a, b \in \Lambda$ such that $a < b$ and $\gamma^\smallfrown \langle a \rangle  \subseteq \alpha$ and $\gamma^\smallfrown \langle b \rangle \subseteq \beta$ (where $<$ is the ordering on $\Lambda$ and $^\smallfrown$ denotes concatenation). By the true path, we mean the leftmost path travelled through infinitely often in the construction. By ``true" outcome $o$ of $\alpha$, we mean that $o$ is the outcome of $\alpha$ on the true path.

\begin{definition} Let $A$ be an \re set. $A$ is \emph{nonspeedable} if and only if there exists some $i$ such that $W_i = A$ and a recursive function $h$ such that for all $j$,
\[W_j  \subseteq^\ast A \Rightarrow (a.e. x)[x \in A \Rightarrow \Phi_i(x) \leq h(x,\Phi_j(x))] \]
where $\subseteq^\ast$ is subset mod finite and $(a.e. x)$ is all $x$ mod finite.

A set $A$ is \emph{speedable} if and only if it is not nonspeedable.
\end{definition}

By \cite{soare77}, instead of just one enumeration being ``optimal", being nonspeedable also implies that every enumeration is optimal. We use this equivalent condition interchangeably:
\begin{definition} $A$ is \emph{nonspeedable} if and only if for every $i$ such that $W_i = A$ there exists a recursive function $h$ such that for all $j$,
\[ W_j \subseteq^\ast A \Rightarrow (a.e. x)[x \in A \Rightarrow \Phi_i(x) \leq h(x,\Phi_j(x))] \tag{$\ast$} \]
\end{definition}

A \emph{splitting} of an \re set $B$ is a pair of disjoint \re sets $X, Y$ whose union is $B$.

\begin{thm} There is a speedable set $B$ such that if $X$ and $Y$ form a split of $B$, at least one of $X$ or $Y$ is nonspeedable.
\end{thm}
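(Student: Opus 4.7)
The plan is to build $B$ by a priority tree construction on $T = \Lambda^{<\omega}$, with two families of requirements. For each recursive $h$, a \emph{speedability requirement} $S_h$ builds an auxiliary r.e.\ set $V_h \subseteq^\ast B$ together with infinitely many $x \in V_h$ at which $\Phi_B(x) > h(x,\Phi_{V_h}(x))$; these collectively witness that $B$ is speedable. For each pair of indices, a \emph{nonsplit requirement} $N_{i,j}$ ensures that if $W_i \sqcup W_j = B$ then at least one of $W_i, W_j$ is nonspeedable. Since nonspeedability is a $\Sigma_3$-flavored condition, each $N_{i,j}$ branches on the tree into sub-outcomes indexed by a chosen side (say $W_i$) and a candidate recursive speedup bound $h$ meant to witness its nonspeedability.

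The core mechanism for meeting $N_{i,j}$ is to steer the speedup witnesses minted by the $S_h$-strategies onto one side of the conjectured split. Below a node that guesses ``$W_i$ is nonspeedable via $h$'', whenever a fresh witness $x$ is produced, the enumeration of $x$ into $B$ is timed so that $x$ lands on the $W_j$-side of the split if the split is genuine. Consequently, for any r.e.\ $W_k \subseteq^\ast W_i$, the potential speedup points of $W_i$ are almost all actually in $W_j$, so $W_k$ cannot realize an infinite speedup of $W_i$'s canonical enumeration, and $W_i$ is nonspeedable.

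The unusual feature, flagged in the introduction, is the behavior at nodes off the true path. In standard tree constructions a node $\alpha$ that guesses a false $\Pi_2$ outcome simply halts its action at some stage. Here, instead, $\alpha$'s ``settling down'' consists of committing to an infinite positive $\Pi_1$ action: from the stage at which $\alpha$ is seen to be wrong onward, every element of its designated type is enumerated into $B$ at the first opportunity. This prevents a wrong guess at $\alpha$ from ever being reinstated, and it preserves the structural invariants needed by the correct strategies further right to build their $V_h$ and to time entries into $B$ correctly.

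I expect the main obstacle to be verifying that these cumulative left-of-true-path enumerations do not destroy speedability of $B$ on the true path. Concretely, despite all the forced enumerations imposed by the infinitely many left nodes, the canonical $B$-enumeration must remain slow at each designated speedup witness $x$, so that $\Phi_B(x) > h(x, \Phi_{V_h}(x))$ is retained at infinitely many $x$ for every recursive $h$. Meeting this requires a delicate stage-delay calibration: each new $x$ is put into $V_h$ essentially at once, but withheld from $B$ until a stage large enough to dominate the ``fast'' $B$-enumerations produced by all left-branching higher-priority nodes. Sections~2 and~3 of the paper develop this calibration in the simplified one- and two-split settings before it is combined into the full construction in Section~4.
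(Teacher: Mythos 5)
Your high-level framing (tree construction, one family of requirements building slow witnesses for speedability of $B$, another family handling splits, infinite $\Pi_1$ positive action left of the true path) matches the paper, and you have correctly identified the delicate interaction between the positive actions and the preservation of slowness on the true path. But the core mechanism you describe for the nonsplit requirements has a genuine gap, and it is not how the paper proceeds.

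You write that ``whenever a fresh witness $x$ is produced, the enumeration of $x$ into $B$ is timed so that $x$ lands on the $W_j$-side of the split if the split is genuine.'' We cannot do this. The split $X,Y$ is given by a pair of r.e.\ sets outside our control; once we put $x$ into $B$, whether $x$ subsequently appears in $X$ or in $Y$ is determined by those enumerations, not by us. Furthermore, the conclusion you draw (``the potential speedup points of $W_i$ are almost all actually in $W_j$, so $W_k$ cannot realize an infinite speedup'') conflates the speedup witnesses we manufacture for $B$ with the speedup points internal to $W_i$ against some $W_k \subseteq^* W_i$; there is no reason these coincide, and nonspeedability of $W_i$ is not established by relocating $B$'s witnesses. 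You also propose guessing a recursive bound $h$ for the nonspeedability of $W_i$ as a $\Sigma_3$ outcome; the paper never guesses such an $h$ --- it defines the witnessing recursive function $g$ outright from the enumeration of $B$ and the given split, so no guessing is needed.

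What the paper actually does is a dichotomy you do not have. For each candidate $V$ the $S$-strategy either finds a witness $x \in V$ that can be permanently kept out of $X$ (so $V \nsubseteq X$), or, failing that, commits to the $(\sharp)$ rule: put $x$ into $B$ the instant it enters $V$, which forces $\Phi_X(x) \leq g(x,\Phi_V(x))$. When these $(\sharp)$ commitments conflict with a $Q_h$-node's attempt to hold a witness out of $B$, the $Q_h$-node does \emph{not} make $X$ nonspeedable by relocation; instead, the $c$ outcome is taken and one observes that all the pulled numbers had to land in $X$ (because $(\sharp)$ is only adopted when $V \cap Y$ is empty), so that $B \cap R_\alpha \subseteq X$ while $B \setminus R_\alpha$ is made recursive by flooding --- hence $Y$ is contained in a recursive set and is itself recursive, so nonspeedable. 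Your plan has no ``else make $Y$ recursive'' branch. Finally, for handling all splits simultaneously the paper needs the switching machinery (cycling $Z_i$ between $X_i$ and $Y_i$ upon each $c$ outcome, with the combinatorial Lemma 5.5 bounding the number of switches); your proposal does not contain this idea, and it is precisely what makes the general case go through.
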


By \cite{soare77}, being nonspeedable is equivalent to being semilow and the following corollary follows immediately:

\begin{cor}
There is a non-semilow set $B$ such that if $X$ and $Y$ form a split of $B$, at least one of $X$ or $Y$ is semilow.
\end{cor}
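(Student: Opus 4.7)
Corollary 2.4 follows at once from Theorem 2.3 and Soare's characterization \cite{soare77}, already recalled in the introduction, that an r.e.\ set is speedable iff it is not semi-low. Applying that equivalence in both directions: the $B$ of Theorem 2.3 is speedable, hence non-semi-low; and in any split $B = X \cup Y$ at least one of $X, Y$ is nonspeedable by Theorem 2.3, hence semi-low. So the real work is Theorem 2.3, for which I outline my plan below.

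I would run a priority tree construction on $T = \Lambda^{<\omega}$ with two families of requirements. For each $j$ indexing a partial $\emptyset'$-computable $\{0,1\}$-function $f_j$, a positive requirement $P_j$ diagonalizes so that $f_j$ is not the characteristic function of $L(B) = \{i : W_i \cap \overline{B} \neq \emptyset\}$; this makes $B$ non-semi-low and hence speedable. For each $e$ indexing a pair $(V_e^0, V_e^1)$ of disjoint r.e.\ sets, a negative requirement $N_e$ ensures that if $V_e^0 \cup V_e^1 = B$ then at least one $V_e^\sigma$ is semi-low. The $P_j$-node reserves a fresh r.e.\ index $i_j$ (so the construction controls $W_{i_j}$) and consults the $\emptyset'$-approximation to $f_j(i_j)$: on value $0$ (predicting $W_{i_j}\subseteq B$) it preserves a witness $x\in W_{i_j}\setminus B$, on value $1$ it dumps all of $W_{i_j}$ into $B$. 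The $N_e$-node attempts in parallel to make each $V_e^\sigma$ semi-low; the $\emptyset'$-procedure deciding $W_i\not\subseteq V_e^\sigma$ combines the $\Sigma_1$ test $W_i\cap V_e^{1-\sigma}\neq\emptyset$ with a $\Pi_1$ test that some designated element of $W_i$ lies in $\overline{B}$, and the strategy enforces the latter by dumping each new element of $W_i$ into $B$ at the first opportunity so that $V_e^0$ or $V_e^1$ promptly absorbs it.

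The main obstacle, as the introduction flags, is that off the true path an $N_e$-strategy does not settle down to finite action: its wrong $\Pi_2$ outcome leaves in place an infinite $\Pi_1$ positive action dumping every integer of some type into $B$. Such a blanket dump can injure a lower-priority $P_j$ trying to preserve a witness out of $B$. My plan is to stratify $\omega$ by type along each path of $T$, so that a $P_j$ below an $N_e$-node on the left chooses its witness from a pool the $N_e$-dump cannot reach, and to bound the dumping timings so that lower strategies on the true path can still secure fresh witnesses. The verification is a true-path induction: each true-path $N_e$-outcome correctly selects a semi-low side of its split, and each true-path $P_j$-outcome attains a stable $\emptyset'$-value and diagonalizes against $f_j$ accordingly, delivering Theorem 2.3 and thereby Corollary 2.4.
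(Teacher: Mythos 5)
Your one-paragraph derivation of Corollary 2.4 from Theorem 2.3 via Soare's equivalence (speedable iff not semi-low, applied to $B$ in one direction and to the split halves in the other) is correct and is exactly the paper's proof, which is a single sentence citing \cite{soare77}. The remainder of your proposal is a sketch of Theorem 2.3 itself, which is a separate statement; I note only that you recast the construction directly in terms of semi-lowness (diagonalizing against $\emptyset'$-computable descriptions of $L(B)=\{i: W_i \cap \overline{B}\neq\emptyset\}$ and dumping to force one half of each split to be semi-low), whereas the paper works with the speedability definition itself (requirements $Q_h$ with timing witnesses $M_\alpha$, $S_V$ requirements controlling enumeration speed, and the fallback that a failed $Q_h$ forces $Y$ to be recursive) -- equivalent by Soare's theorem but with genuinely different combinatorics, so your Theorem 2.3 plan would need to be assessed on its own terms rather than against the paper's.
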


\medskip
To construct $B = W_i$ to be speedable, we diagonalize against all recursive functions $h$ that could witness ($\ast$); \ie for each recursive $h$, we build a \re $W_j \subseteq B$ that witnesses the failure of ($\ast$) for $h$.

To ensure that $B$ cannot be split into speedable sets, if $X$ and $Y$ form a split of $B$, we first try to ensure that $X$ is nonspeedable. This attempt may interfere with our making $B$ speedable, in which case we see that we have the means to ensure that $Y$ is recursive.

\section{One split}
We first deal with the case where we are given a split of $B=W_e$ recursively in $e$, \ie $W_i, W_j$ are splits of $B$ given by a recursive function $f$ such that $f(e) = (i,j)$.

\begin{lemma} For every recursive function $f$, there is a speedable set $B$ such that if $X, Y$ is the split of $B$ given by $f$, at least one of $X$ or $Y$ is nonspeedable.
\end{lemma}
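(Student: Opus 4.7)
The plan is to invoke the recursion theorem to fix an index $e$ with $B = W_e$ and $f(e) = (i, j)$, so that the splitting indices are known throughout the construction and we can monitor $W_i$ and $W_j$ alongside the enumeration of $B$.

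Two families of requirements must be juggled. For each $n$ with $\varphi_n$ total recursive, the requirement $R_n$ produces an r.e.\ set $V_n \subseteq B$ witnessing speedability of $B$. I would satisfy $R_n$ by the standard delay technique: at a suitable stage $s$, pick a fresh large witness $x$, enumerate $x$ into $V_n$ at stage $s$ (so $\Phi_{V_n}(x) = s$), and refuse to enumerate $x$ into $B$ until after stage $\varphi_n(x, s)$. For each $k$, the requirement $N_k$ aims to make $W_i$ semilow, i.e., to arrange that the question ``$W_k \cap \overline{W_i} \neq \emptyset$?'' is $\leq_T \emptyset'$.

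The central decomposition for $N_k$ is
\[ W_k \cap \overline{W_i} = (W_k \cap \overline{B}) \cup (W_k \cap W_j). \]
The second summand is uniformly r.e.\ in $k$, hence automatically $\emptyset'$-decidable. The obstruction is the first summand, since $L_B \not\leq_T \emptyset'$ once $B$ is speedable. The $N_k$ strategy is therefore to enumerate every $x \in W_k$ into $B$, collapsing $W_k \cap \overline{B}$ mod finite and reducing the question to the r.e.\ second piece. I would order requirements so that $N_k$ defers to $R_n$ for $n \leq k$: if $N_k$'s pushing $x$ into $B$ would destroy a pending gap for some $R_n$, we hold $x$ back, and $x$ then becomes a permanent element of $W_k \cap \overline{B}$ serving as a $\emptyset'$-computable positive witness.

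The fallback, needed when $f$'s routing of our $R_n$-witnesses systematically places them inside $W_i$ (thereby providing speed-ups for $W_i$ itself and spoiling semilowness), is to ensure $W_j$ is recursive instead. Because the splitting indices are known and we observe the routing at each stage, we can recognize this regime; once detected, we freeze further fresh enumerations into $B$ and thereby force $W_j$ to be finite (hence recursive, hence trivially nonspeedable). The main obstacle I anticipate is showing that this dichotomy is globally coherent: either the semilow strategy for $W_i$ succeeds, or the recursiveness strategy for $W_j$ does, and the construction must read off which case we are in from $f$'s limit behavior on the $R_n$-witnesses. In the one-split setting with $f$ recursive I expect this to be executable as a finite-injury ($\emptyset'$-priority) argument; the full Theorem~\ref{thm:main} (where the splitting is not uniformly recursive in $e$) is why the paper turns to the tree construction sketched in the introduction.
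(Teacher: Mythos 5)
Your setup — recursion theorem to fix $e$ with $B=W_e$ and $f(e)=(i,j)$, speed-up requirements $R_n$ via delayed enumeration, and the decomposition $W_k \cap \overline{W_i} = (W_k \cap \overline{B}) \cup (W_k \cap W_j)$ with the ``dump $W_k$ into $B$'' strategy — is sound and tracks the paper's $Q_h$, $S_V$, and $(\sharp)$-commitment machinery under a different (but equivalent, via Soare's semilowness characterization) formulation. The genuine gap is the fallback. You propose that when the dump strategy destroys the $R_n$-gaps and routes witnesses into $W_i$, you ``freeze further fresh enumerations into $B$'' to make $W_j$ finite. This cannot work: a frozen $B$ is finite, hence recursive, hence not speedable, which contradicts the very conclusion you must deliver. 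You cannot sacrifice speedability of $B$ to rescue the split.

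The paper's resolution is entirely different and this is the heart of the construction. When a $Q_h$-node $\alpha$ sees its witnesses systematically pulled by higher-priority $S$-nodes (the $c$ outcome), $\alpha$ keeps $B$ \emph{growing} but partitions the fresh elements into a recursive set $R_\alpha$ (holding the pulled witnesses, which the $S$-hypotheses force into $X$) and its complement, which step (4) of the $Q_h$-strategy fills with a recursive set of ``unused'' elements. The upshot is an equation $B \cap R_\alpha = X \cap R_\alpha$ mod finite, so $Y \subseteq B \setminus R_\alpha$, a recursive set; $Y$ is then a split of a recursive set and is itself recursive — while $B$ stays infinite and a fresh copy of $Q_h$ below $\alpha^\smallfrown\langle c\rangle$ continues the speedability work drawing from $R_\alpha$. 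This also explains why the paper still uses a tree in the one-split case: the dichotomy (``cofinitely many pulled'' versus ``infinitely many survive'') is $\Pi_2/\Sigma_2$, and the two regimes demand genuinely different downstream constructions, not merely a priority reordering. Your closing paragraph correctly identifies this coherence problem as the main obstacle, but the proposal does not solve it, and as written the freezing step is incorrect rather than merely incomplete.
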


\begin{proof} We recursively enumerate $B$ and ensure that $B$ is speedable.

The proof is a tree construction in the sense of \cite{soare} (Chapter 14). Nodes work on strategies. For a node $\alpha$ working on a strategy, $\alpha$ builds $B_\alpha$ which is intended to be an enumeration of $B$ with a different timescale. $B_\emptyset$ is $B$. $\alpha$ also builds $P_\alpha$, a recursive set which will be used as a pool of numbers. We refer to $P_\alpha$ as the pool lower priority nodes use. $\alpha$ uses witnesses from the pool given by the previous node. $P_\emptyset$ is $\omega$. A member in a pool can become ``used" but does not necessarily have to go into $B$. One way a number $x$ can become used is if $x$ enters $B_\alpha$. If an number in $\alpha$'s pool is ``unused", $\alpha$ can keep it out of $B$.

Modulo finite injury, a node can recursively tell which pool it is using. A node can also recursively tell which elements in a pool it can keep out.

By the fixed point theorem, we have some index $k$ such that $B = W_k$. We technically want to show that $W_k$ is speedable.

\subsection*{Requirements}
To build a speedable set, we build an \re set $B$ satisfying the following requirements, one for each partial recursive function $h$:
\[Q^\ast_h : (\exists M \subseteq^* B)(\exists^\infty x)(\Phi_B(x) > h(x,\Phi_M(x)),\]

\noindent where $\Phi_B(x)$ is according to when we put $x$ into $B$.

As there is at most a recursive difference between when $x$ enters $B$ and when $x$ enters $W_k$, satisfying $Q_h^\ast$ shows that $W_k$ is speedable.

We do not satisfy $Q_h^\ast$ directly. Instead, we satisfy the following:
\[Q_{h} : (\exists M \subseteq^* B)(\exists^\infty x)(\Phi_B(x) > h(x,\Phi_M(x)) \vee Y \text{ is recursive}.\]

We work on $Q_h$ instead of $Q_h^\ast$ as we are unable to achieve the first disjunct if $X$ is speedable. If $Y$ is recursive, we have another node working on $Q_h$.

We try to build a $M$ satisfying $(\exists^\infty x)(\Phi_B(x) > h(x,\Phi_M(x))$ by putting $x$'s into $M$ and keeping them out of $B$ until stage $h(x,\Phi_M(x)) + 1$. If we succeed for only finitely many $x$'s, we conclude that $Y$ is recursive and thus nonspeedable.

\medskip

To make one of $X$ or $Y$ nonspeedable, we either make $Y$ recursive by some $Q$-strategy or we make $X$ nonspeedable by satisfying the following requirements for the recursive $g$ defined below and all \re sets $V$ (an equivalent to Definition 2.1):
\[S_{V} : (\exists x)(x \in V \wedge x \notin X) \text{ or } (\forall x)[x \in V \Rightarrow \Phi_X(x) \leq g(x, \Phi_V(x))]\]

Define $g(x,s)$ to be the least stage $t$ that $x$ enters $X$ or $Y$ if $x$ is in $B_s$. Otherwise, we let $g(x,s)$ equal to 0. Observe that $g$ is total and recursive.

We also have the following requirements to help the $Q$-strategies (see Lemma 3.8 for where $D_i$'s are required in the verification):
\[D_i: \Delta_i \neq B\]

\subsection*{Remark} It may seem redundant to have the $D$ strategies when every speedable set is not recursive but showing that the set we build, $B$, is speedable requires showing that a particular recursive description does not work. Satisfying all of the $D$ strategies shows that $B$ is not recursive and thus this particular recursive description does not work. It is true that we could incorporate this into the $Q$ strategy but we have chosen to separate the strategies in order to not overly complicate the construction.

\subsubsection*{Outcomes of $Q$-nodes}
A node $\alpha$ working on a $Q$-strategy has outcomes: $\infty$ (for infinitely many), $c$ (for cofinitely many) and $h$. Along the $\infty$ outcome, infinitely many $x$'s in $M_\alpha$ enter $B$ after stage $h(x,\Phi_{M_\alpha}(x)) + 1$. Along the $c$ outcome, cofinitely many $x$'s in $M_\alpha$ enter $B$ before $h(x,\Phi_{M_\alpha}(x)) + 1$ and we see that we have the means to ensure that $Y$ is recursive. Along the $h$ outcome, $h$ is partial.
\subsubsection*{Outcomes of $S$-nodes}
A node $\alpha$ working on a $S$-strategy has outcomes: $k$ (for keep out of $B$) and $s$ (for $(\sharp)$ where $(\sharp)$ will be defined in the description of the strategy). Along the $k$ outcome, either there is some $x$ in both $V$ and $Y$ or $\alpha$ can keep some element in $V$ out of $B$. Along the $s$ outcome, $\alpha$ does not achieve $(\exists x)(x \in V \wedge x \notin X)$ and $\alpha$ tries to achieve $x \in X \Rightarrow \Phi_X(x) \leq g(x, \Phi_V(x))$ for all $x$.
\subsubsection*{Outcomes of $D$-nodes}
A node $\alpha$ working on a $D$-strategy has outcomes: $a$ (for act) and $d$ (for diverge). Along the $a$ outcome, $\Delta_i(x)$ converges and equals 0 and $\alpha$ would like to put $x$ into $B$. Along the $d$ outcome, $\Delta_i(x)$ diverges or $\Delta_i(x)$ converges and does not equal 0 and $\alpha$ tries to keep $x$ out of $B$.
\subsection*{Priority Tree}
\medskip
Fix a recursive ordering of the $Q$- and $S$-requirements.

Let $\Lambda = \{\infty, c, h, k,s,a,d\}$ with ordering $\infty < c < h <k <s <a <d$. The tree is a subset of $\Lambda^{<\omega}$ and is built by recursion as follows:

Assign the highest priority $Q_h$ requirement to the empty node and let $\infty$, $c$ and $h$ be its successors. Assume that we assigned a requirement to $\beta = \alpha \upharpoonright (|\alpha| - 1)$, which we call the $\beta$-requirement. We now assign a requirement to $\alpha$. If $c$ is the last node of $\alpha$, assign $\beta$-requirement to $\alpha$ and let its successors be $\infty$ and $h$.

Suppose $c$ appears in $\alpha$ and is not the last node of $\beta$: If the $\beta$-requirement is a $D$-requirement, assign the highest priority $Q$-requirement that has not been assigned so far and let its successors be $\infty$ and $h$. If the $\beta$-requirement is a $Q$-requirement, assign the highest priority $D$-requirement that has not been assigned so far and let its successors be $a$ and $d$.

Suppose $c$ does not appear in $\alpha$: If the $\beta$-requirement is a $Q$-requirement, assign the highest priority $D$-requirement that has not been assigned so far and let its successors be $a$ and $d$. If the $\beta$-requirement is a $D$-requirement, assign the highest priority $S$-requirement that has not been assigned so far and let its successors be $k$ and $s$. Otherwise, assign the highest priority $Q$-requirement that has not been assigned so far and let its successors be $\infty$, $c$ and $h$.

Observe that we only allow $\infty$ and $h$ to be successors for $Q$-requirement nodes that appear after an instance of $c$. we will prove in the verification (Lemma 3.8) that on the true path, if the $c$ outcome occurs, all $Q_h$-nodes $\alpha$ after it must achieve: $(\exists^\infty x)(\Phi_B(x)> h (x,\Phi_{M_\alpha}(x)) + 1)$ for $M_\alpha$ that $\alpha$ builds if $h$ is not partial.

\subsection*{Dynamics of the construction}
We now describe exactly how the $Q_h$ nodes $\alpha$ and the $S$ nodes achieve their goals. At a typical stage of the construction, we will need to consider the configurations as indicated by Figure 1 below. At $\alpha$, $Q_h$ will attempt to pick a witness, wait for a delay determined by $h$, and put the element into its local version of $B$, giving this to $\beta$ for it to process in a like fashion, assuming that $\beta$ is a $Q_{h^\prime}$ node. (Strictly speaking, this is likely a $\beta^\prime \subset \beta$.) Eventually all such $Q$ nodes process $x$ and it will enter $B$. We refer this as $x$ entering $B$ the \emph{slow} way.

Now, it could be that a $S$-node might care about $x$ and interrupt this procedure. There are two ways this can occur. First some $S$ node $\nu$ of higher priority than $\alpha$ discovers $x \in V_\nu[s]$. Then, $\nu$ can see a global win and will restrain $x$ with priority $\nu$. We will allow this to happen even if $\nu$ is strictly left of the current approximation to the true path. An inductive argument will show that such restrains with a $\nu$-node injuring the action of $\alpha$ with $\nu <_L \alpha$ can happen at most finitely often.

The most important new idea in this construction is that other $S$ nodes can affect the action of $\alpha$. We will allow $\eta$ nodes for $S_{V_i}$ for $\alpha^\smallfrown \langle \infty \rangle \subseteq \eta$ to also pull witnesses from $\alpha$. This will only be allowed if such a node $\eta$ is \emph{active}, meaning that we have actually visited it at some stage $s^\prime <s$. If these $\eta$ nodes prevent $\alpha$ from satisfying the first conjunct of $Q_h$ (i.e.~ cofinitely many $x$'s go into $B$ before stage $h(x,\Phi_{M_\alpha}(x))+1$), we can give a proof that $Y$ is recursive. The construction does the following. If an active $\eta$ sees such an $x$ then $\eta$ \emph{immediately} puts $x$ into $B$ and $\alpha$ makes sure that the next such $x$ must be large. Notice that the hypothesis of the $S$ node being correct implies that $x$ must enter $X$ and not $Y$. Thus, assuming that the $c$ outcome is the true outcome for $\alpha$, no small number can enter $Y$ since we can ensure that pulled numbers that enter $B$ are from a set that is disjoint from $Y$.

The point is the following. The only place (small) numbers that enter $B$ will come from will be from those nodes extending $\alpha^\smallfrown \langle c \rangle$. If $\eta$ above saw these numbers \emph{before} they reached $\alpha$ in their upward climb, then $\eta$ would simply restrain them and win forever. Hence they can only be pulled by $\eta$ after they reach $\alpha$. The relevant $x$ always enters $X$. Then we will reset the sizes of numbers so that we get a recursive description of $Y$.

\begin{figure}
\begin{tikzpicture}[level distance=2cm,sibling distance=1cm,
   edge from parent path={(\tikzparentnode) -- (\tikzchildnode)}]
\Tree [.\node {$\beta$}; [.\node {$\alpha$ ($Q_h$ requirement)};  \edge node[auto=right] {$\infty$};
    [.\node {$S_{V_1}$}; \edge node[auto=right] {$s$};
      [.\node {$S_{V_2}$};]
    ]
    \edge node[auto=left] {$c$};  [.\node {$\Pi_2$ and $\Sigma_2$};] \edge node[auto=left] {$h$};  [.\node {$\gamma$ ($D_i$ requirement)};]    ]]
\end{tikzpicture}
\caption{The dynamics between the $Q$- and $S$-strategies.}
\end{figure}
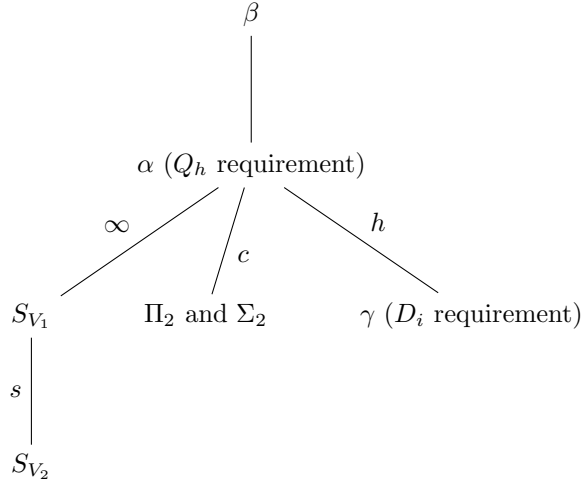

The reader should note that if $\alpha^\smallfrown \langle \infty \rangle$ is \emph{not} on the true path, then we will only visit $\eta$'s extending $\alpha^\smallfrown \langle \infty \rangle$ finitely often so the correct outcome of $\alpha$ is $c$. This outcome has a version of $Q_h$ attached which will succeed in meeting $h$ since we $\emph{know}$ that such $\eta$ cannot pull the element \emph{before} they get to $\alpha$.

It is important to note that there are no $S$ requirements below the $c$ outcome since if $c$ is the true outcome, we have a proof that $Y$ is recursive.
If $\alpha$ is on the true path and $c$ is the outcome of $\alpha$ on the true path, $\Pi_2$ (in Figure 1) reflects the fact that infinitely often, we assign numbers to go into $M_\alpha$ and $\Sigma_2$ (in Figure 1) reflects the fact that only finitely many numbers stay in $M_\alpha$ long enough to see $h$ defined.

As an $S$-strategy can become infinite positive, the $Q$- and $D$-strategies cannot be infinite negative. Furthermore, the $Q$- and $D$-strategies cannot have negative restraint that effect $S$-strategies. Negative restraints that go up and drop down, such as in the minimal pair construction, are also not allowed.

We now turn to the formal details.

\subsection*{Convention} We use the following convention for labeling an element ``used" or ``unused" at $\alpha$ and at stage $s$. If an element becomes used, it is used forever. If an element is picked by a $D$-strategy, it is used. If an element is picked by an $S$-strategy, it is used. If an element goes into $B_\alpha$, it is used. For a $Q$-strategy, a picked element is still ``unused" (see step (1) of the strategy for $Q_h$) but becomes used when put into $M_\alpha$ or into $B_\alpha$ (see step 2(b) of the strategy for $Q_h$).

This convention is needed in the verification (see Lemma 3.4) to prove that for every node $\alpha$, if an element $x$ is unused and in $\alpha$'s pool, $x$ does not go into $B$ while $\alpha$ restrains $x$.

In the following strategies, we work on a node $\alpha$ whose predecessor is $\beta$. $\alpha$ uses the pool given by its predecessor, denoted by $P_\beta$. Recall that the pool of the empty set is $\omega$ and recall that $B_\emptyset = B$.

\subsection*{Strategy for $D_i$} Takes parameters: $(B_\beta, P_\beta)$. Outputs parameters $(B_\alpha, P_\alpha) = (B_\beta, P_\beta \setminus x_\alpha $).

\medskip
Pick an unused witness $x_\alpha \in P_\beta$. Keep $x_\alpha $ out of $B$ and wait for $\Delta_i(x_\alpha )$ to converge to 0. If it does, put $x_\alpha $ into $B_\beta$.

If an element $x$ enters $B_\alpha$, put $x$ into $B_\beta$ unless $D_i$ is assigned to the $h$ outcome, in which case we put $x$ into $B_\eta$ (where $\eta$ is $\beta$'s predecessor). Define $P_\alpha$ to be $P_\beta \setminus x_\alpha$ and $B_\alpha = B_\beta$.

\medskip
Note that $x_\alpha$  may have to enter other sets before entering $B$.

\subsection*{Strategy for $S_{V}$} Takes parameters: $(B_\beta, P_\beta)$. Outputs parameters $(B_\alpha, P_\alpha) = (B_\beta, P_\beta \setminus x_\alpha$) or $= (B_\beta, P_\beta)$.

Below, we go through steps (1) - (4) to find a witness $x_\alpha$ in $V$ that can be permanently be kept out of $X$ either by action of $\alpha$, by action of a higher priority node or by existence of an $x$ already in $V \cap Y$. Step (3)(b) is allows an $S$-node to grab and restrain any element $x$ before $x$ reaches a higher priority node in its upward climb. Step (3)(b) also stops such $x$, if chosen, from continuing in its upward climb.

While $x_\alpha$ has not been defined, go through the following steps in order. Upon defining $x_\alpha$, continue to define $B_\alpha$ and $P_\alpha$ as below.
\begin{enumerate}
\item Ask whether there is some $x$ in $V \cap Y$. If so, let $x_\alpha$ be the least such $x$ and continue to define $P_\alpha$ and $B_\alpha$ as below. Note that $S_V$ is satisfied as $V \nsubseteq X$.
\item Ask whether there is a $\gamma$ working on a $S$-strategy such that $x_\gamma$ is defined and $x_\gamma \in V$. If so, let $x_\alpha = x_\gamma$ and restrain lower priority requirements from putting $x_\alpha$ into $B$. Note that if $\alpha$ permanently restrains $x_\alpha$, $S_V$ is satisfied as $x_\alpha$ witnesses $(\exists x)(x \in V \wedge x \notin B)$, hence $V \nsubseteq X$.
\item Look for a $\gamma$ working on a $D$-strategy with $x_\gamma \in V$ as follows.
\begin{enumerate}
\item Ask whether there is a higher priority $\gamma$ already restraining $x_\gamma \in V$. If so, let $x_\alpha = x_\gamma$. Note that if $\gamma$ permanently restrains $x_\gamma = x_\alpha$, then $S_V$ is satisfied as $x_\alpha$ witnesses $(\exists x)(x \in V \wedge x \notin B)$ hence $V \nsubseteq X$ as in case (2). If $\gamma$ is reset, we reset $x_\alpha$ as well.
\item Ask whether there is some weaker priority $\gamma$  such that $x_\gamma$ is not in $B$ and not in $B_\delta$ for any $\delta$ of higher priority than $\alpha$. If so, let $x_\alpha = x_\gamma$ and restrain lower priority nodes $\eta$ from putting $x_\alpha$ into $B_\xi$ where $\xi$ is $\eta$'s predecessor. Reset the $\gamma$ node (so that if we travel to the $\gamma$ node again, it chooses another witness). Note that if $\alpha$ permanently restrains $x_\alpha$, $S_V$ is satisfied as $x_\alpha$ witnesses $(\exists x)(x \in V \wedge x \notin B)$, hence $V \nsubseteq X$ as in case (2).
\end{enumerate}
\item Ask whether there is an unused $x \in V \cap P_\beta$. If so, let $x_\alpha$ be the least such $x$ and restrain lower priority requirements from putting $x_\alpha$ into $B$. Note that if $\alpha$ permanently restrains $x_\alpha$, $S_V$ is satisfied as $x_\alpha$ witnesses $(\exists x)(x \in V \wedge x \notin B)$, hence $V \nsubseteq X$ as in case (2).
\end{enumerate}

While (1) - (4) do not hold, commit to:
\begin{equation}
\text{When } x \text{ enters } V \text{, put } x \text{ into } B \text{ immediately} \tag{$\sharp$}.
\end{equation}

With $(\sharp)$, if $x$ enters $V$ at stage $s$, it must enter $B$ at stage $s$ so $\Phi_X(x) = g(x,\Phi_V(x))$. Thus, $S_V$ is satisfied (by satisfaction of its second disjunct).

If an element enters $B_\alpha$, put it into $B_\beta$. If $x_\alpha$ is undefined, let $P_\alpha = P_\beta$. Otherwise, let $P_\alpha = P_\beta \setminus x_\alpha$. In either case, let $B_\alpha = B_\beta$.

\subsection*{Strategy for $Q_{h}$} Takes parameters: $(B_\beta, P_\beta)$. Builds parameters: $R_\alpha$, $M_\alpha$, $P_\alpha^\infty$, $B_\alpha^c$. Outputs parameters $(B_\alpha, P_\alpha) = \begin{cases} (B_\beta, P_\alpha^\infty) & \text{ if outcome is } \infty \\
                           (B_\alpha^c, R_\alpha) & \text{ if outcome is } c \\
                           (B_\beta, R_\alpha) & \text{ if outcome is } h \end{cases}$

\medskip
To explain, $R_\alpha$ is a recursive set, $B_\alpha^c$ is an \re set and $P_\alpha^\infty$ is a recursive set.

Recall (see section, Dynamics of the construction) that in the $c$ outcome, we need to ensure that $Y$ is a recursive set. To achieve this, we ensure that $Y$ is contained in a recursive subset of $B$ (namely, $B \setminus R_\alpha$) so $Y$ is the split of a recursive set and thus $Y$ is recursive.

Also recall that in the $c$ outcome, pulled numbers have to enter $X$ and such (small) numbers have to come from nodes below $\alpha^\smallfrown \langle c \rangle$ and reach $\alpha$ in their upward climb. The role $R_\alpha$ plays is to provide a pool of elements for these numbers so pulled numbers only come from $R_\alpha$ (but do not have to be contained in a version of $R_\alpha$ that has been reset). Furthermore, we ensure that $B \cap R_\alpha$ only consists of these pulled numbers, which enter $X$ in the $c$ outcome. Thus in the $c$ outcome, $Y \subseteq B \setminus R_\alpha$ and we will ensure that $B \setminus R_\alpha$ is recursive.

In order to ensure that $B \cap R_\alpha$ only consists of pulled numbers in the $c$ outcome, $R_\alpha$ can be reset as numbers that are not pulled and enter $B$ cannot be in $R_\alpha$. In the $\infty$ outcome, $R_\alpha$ is reset infinitely many times.

The role of $P_\alpha^\infty$ is the following: As the construction tries to ensure that $B \setminus R_\alpha$ is recursive and can potentially reset $R_\alpha$ infinitely often, we need a pool of elements for the $\infty$ outcome, namely, $P_\alpha^\infty$. $P^\infty_\alpha$ will only increase when we see another witness for the $\infty$ outcome so as long as we ensure that $(B \setminus R_\alpha) \cap \overline{P_\alpha^\infty}$ is recursive, $B \setminus R_\alpha$ is recursive in the $c$ outcome as $P_\alpha^\infty$ is finite. In the $\infty$ outcome, $P_\alpha^\infty$ will be recursive and infinite.

\medskip

Now, we explain the intuition behind steps (1) - (4) below: Step (1) builds $R_\alpha$ and step (2) builds $M_\alpha$.

Step (3) looks at the case where $x$ has entered $M_\alpha$ (i.e.~ $x$ has reached $\alpha$ in its upward climb) and no node has pulled $x$. In (3)(a), we put $x$ into $B_\beta$ to let $x$ continue in its upward climb. In 3(b), we add another element to $P_\alpha^\infty$ as we see another instance of $(\exists^\infty x)(\Phi_B(x) > h (x,\Phi_{M_\alpha}(x))+1)$. We also reset $R_\alpha$ as $x$ is not a pulled number but is in $R_\alpha$ and we only want $B \cap R_\alpha$ to contain pulled numbers. We put all elements in $M_\alpha$ into $B$ at this step as these elements are no longer in the $R_\alpha$ after the reset. This is part of our attempt to ensure that $B \setminus R_\alpha$ is recursive.

In step (4), we put all unused elements below $x$ that are in $\overline{R_\alpha} \cap \overline{P_\alpha^\infty}$ into $B$. This step forces later witnesses to be large and in particular, bigger than $x$.

\medskip

Now we formally state steps (1) - (4):

\begin{enumerate}
\item Pick an unused witness $x \in P_\beta$. Put $x$ into $R_\alpha$. $x$ is subject to steps (2) and (3). Note that $x$ does not necessarily have to enter $M_\alpha$.
\item While $x$ is not in $M_\alpha$,
\begin{enumerate}
\item Restrain $x$ from entering $M_\alpha$ until it enters $B_\alpha^c$ (by the action of lower priority nodes).
\item If $x$ goes into $B_\alpha^c$, put $x$ into $M_\alpha$.
\end{enumerate}
\item If $x$ enters $M_\alpha$,
\begin{enumerate}
\item If the current stage is greater than $h(x,\Phi_{M_\alpha}(x))+1$ and $x$ has not entered $B_\beta$, put $x$ into $B_\beta$.
\item If $x$ is successfully is kept out of $B$ until stage $h(x,\Phi_{M_\alpha}(x))+1$, we do the following: Pick an unused witness $y$ in $P_\beta \cap \overline{R_\alpha}$. Put $y$ in $P_\alpha^\infty$ and extend $P_\alpha^\infty$'s recursive description up to $y$, \ie if an arbitrary $z \leq y$ is in $P_\alpha^\infty$ at the current stage, the description says $z$ is in and if $z$ is not in $P_\alpha^\infty$ at the current stage, the description says $z$ is out. Reset $R_\alpha$ and put all elements in $M_\alpha$ into $B$ (since these elements are no longer in our current $R_\alpha$).
\end{enumerate}
\item Put in all unused elements in $\{y: y < x \} \cap \overline{R_\alpha} \cap \overline{P_\alpha^\infty}$ into $B$.
\end{enumerate}

If we have the $c$ outcome, $P_\alpha = R_\alpha$ and $B_\alpha = B_\alpha^c$. If we have the $\infty$ outcome, $P_\alpha = P_\alpha^\infty$ and $B_\alpha = B_\beta$.

Observe that we would like to achieve:

\medskip
($\dagger$) If $x$ enters $M_\alpha$, $x$ is kept out of $B$ until stage $h(x,\Phi_{M_\alpha}(x))+1$.

\medskip
If $(\dagger$) is achieved for infinitely many $x$'s, $Q_h$ is satisfied. Recall we use $\infty$ outcome to denote that infinitely many $x$'s achieve $(\dagger)$. Thus in the $\infty$ outcome, $Q_h$ is satisfied.
\subsection*{Construction}
We call a node working on a $S$-strategy \emph{active} at stage $s$ if the node has acted at some stage $t <s$ and has not been cancelled (or reset).

At stage $s$, define $\delta_s$ (an approximation to the true path) by recursion as follows. Suppose that $\delta_s \upharpoonright e$ has been defined for $e <s$. Let $\alpha$ be the last node of $\delta_s \upharpoonright e$. We now define $\delta_s \upharpoonright (e+1) \supseteq \delta_s \upharpoonright e$.

\begin{enumerate}
\item If $\alpha$ is a $Q$-node, look to see if it is waiting for a number $x$ to go into $B_\beta$ (where $\beta$ is $\alpha$'s predecessor), \ie there is some $x$ in $M_\alpha$ that has not entered $B_\beta$. If so, look to see if an active node $\eta$ to to the left or below $\alpha^\smallfrown \langle \infty \rangle$ would like to put $x$ in (or keep $x$ out if $\eta$ is to the left). Let $\eta$ act. If there are no remaining numbers that $\alpha$ is waiting on, let $\delta_s \upharpoonright (e+1) = (\delta_s \upharpoonright e)^\smallfrown \langle c \rangle$. Otherwise, look to see if there is some $z$ in the remaining numbers such that $h(z,\Phi_{M_\alpha}(z))$ has converged and our current stage is greater than $h(z,\Phi_{M_\alpha}(z))+1$. If so, let $\delta_s \upharpoonright (e+1) = (\delta_s \upharpoonright e)^\smallfrown \langle \infty \rangle$. If not and some $x$ was pulled, let $\delta_s \upharpoonright (e+1) = (\delta_s \upharpoonright e)^\smallfrown \langle c \rangle$. Otherwise, let $\delta_s \upharpoonright (e+1) = (\delta_s \upharpoonright e)^\smallfrown \langle h \rangle$.
\item If $\alpha$ is a $S$-node, look to see if $x_\alpha$ has been defined. If so, let $\delta_s \upharpoonright (e+1) = (\delta_s \upharpoonright e)^\smallfrown \langle k \rangle$. Otherwise, go through steps (1) - (4) in the strategy for $S_V$. If one of (1) - (4) holds and we can define $x_\alpha$, let $\delta_s \upharpoonright (e+1) = (\delta_s \upharpoonright e)^\smallfrown \langle k \rangle$. Otherwise, let $\delta_s \upharpoonright (e+1) = (\delta_s \upharpoonright e)^\smallfrown \langle s \rangle$.
\item If $\alpha$ is a $D$-node, look to see whether $\Delta_i(x_\alpha)$ has converged and equals 0. If so, let $\delta_s \upharpoonright (e+1) = (\delta_s \upharpoonright e)^\smallfrown \langle a \rangle$. Otherwise, let $\delta_s \upharpoonright (e+1) = (\delta_s \upharpoonright e)^\smallfrown \langle d \rangle$.
\end{enumerate}

Reset nodes to the right of $\delta_s$ and let active $S$-nodes to the left of $\delta_s$ or below the $\infty$ outcome for one of the nodes in $\delta_s$ act. At substage $t \leq s$, let $\delta_s \upharpoonright t$ act according to its description in the strategies above, \ie $\delta_s \upharpoonright t$ enumerates numbers in its sets and extends their definitions.

\subsection*{Verification} Recall that the true path denotes the leftmost path traveled through infinitely often by the construction. $\beta$ is $\alpha$'s predecessor unless stated otherwise.

\begin{lemma} For $\alpha$ on the true path working on a $D$-requirement, we only reset $x_\alpha$ at most finitely many times. For $\alpha$ on the true path working on a $Q$-requirement, $M_\alpha \subseteq^\ast B$.
\end{lemma}
\begin{proof}
Let $\alpha$ be a node on the true path. We travel to the left of $\alpha$ at most finitely often in the construction so in the limit, there are only finitely many higher priority active $S$-strategies. Now, consider only the stages after which we do not go to the left of $\alpha$.

Suppose that $\alpha$ is working on a $D$-requirement. At any stage $s$, we only reset $x_\alpha$ due to a higher priority active $S$-strategy. We only reset $x_\alpha$ at most once per every given $S$-strategy due to 3(b) and thus we only reset $x_\alpha$ at most finitely many times.

Suppose that $\alpha$ is working on a $Q$-requirement. We only restrict $x$ in $M_\alpha$ from entering $B$ due to a higher priority active $S$-strategy. We only restrict at most one element per every given $S$-strategy due to 3(b) and thus $M_\alpha \subseteq^\ast B$.
\end{proof}

The next lemma proves that if an $x$ has started on its upward climb, it either reaches $B$ or is reset. The next lemma also shows that if $x$ has started on its upward climb, this action must be initiated by a $D$-node.

\begin{lemma}
If $x_\alpha$ is picked for a node $\alpha$ on the true path and $\alpha$ puts $x_\alpha$ into $B_\beta$, it is either reset or put into $B$. For $\alpha \neq \emptyset$, if $x$ is put in $B_\alpha$ by a lower priority node, $x$ was picked as a witness for a lower priority node $\gamma$ working on a $D$-strategy.
\end{lemma}
\begin{proof}
Suppose that $x_\alpha$ is never reset. Consider only the stages after which we do not go to the left of $\alpha$. By assumption, $x_\alpha$ is put into $B_\beta$. We now prove on the true path that if $x_\alpha$ enters $B_\gamma$ for $\gamma \subseteq \alpha$ then $x_\alpha$ enters $B$ by an $S$-strategy or $x_\alpha$ enters $B_\delta$ where $\delta$ is $\gamma$'s predecessor. Suppose that $x_\alpha$ enters $B_\gamma$ for $\gamma \subset \alpha$. If $x_\alpha$ is grabbed by an active $S$-node to the left, it is put in $B$. If $\gamma$ is working on a $S$-requirement, $\gamma$ either puts $x_\alpha$ into $B$ by $(\sharp)$ or puts $x_\alpha$ into $B_\delta$. If $\gamma$ is working on a $D$-requirement, $\gamma$ puts $x_\alpha$ into $B_\delta$. If $\gamma$ is working on a $Q$-requirement, either $x$ was put into $B_\delta$ as $B_\delta = B_\gamma$ if $\alpha$ extends the $\infty$ or $h$ outcome of $\gamma$ or $\gamma$ puts $x_\alpha$ into $M_\gamma$ as $x_\alpha$ is not reset and thus is not being restrained by another strategy. $\gamma$ then puts $x_\alpha$ into $B_\delta$ at step (3)(a) or $B$ if $x_\alpha \notin R_\alpha$ at step (3)(b) of the $Q$-strategy. The only other possibility is that $x_\alpha$ is grabbed by an active $S$-node to the left and put into $B$. Thus it follows that $x_\alpha$ enters $B$.

By inspection of strategies, if $x$ is put in $B_\alpha$ by a lower priority node, $x$ must come from a lower priority node $\gamma$ working on a $D$-strategy as the $Q$- and $S$-strategies do not pick elements and put them into $B_\eta$ for any $\eta$. The only elements $D$-strategies $\gamma$ put into $B_\eta$ for any $\eta$ are the witnesses $x_\gamma$ they chose.
\end{proof}

\begin{lemma}
For every node $\alpha$, if $x \in P_\beta$ is unused and is not reset as a witness, $x$ does not go into $B$ while $\alpha$ is restraining $x$.
\end{lemma}
\begin{proof}
This follows by inspection of strategies and induction. Let $x$ be an arbitrary unused element in $P_\beta$ and let $s$ be the stage that we choose $x$ to be kept out of $B$. By our convention of unused/used, $x$ is used after stage $s$.

Let $\gamma$ be another node on the tree and let $\delta$ be its predecessor.

If $\gamma$ is working on a $D$-strategy, it only puts $x_\gamma$ or a number put into $B_\gamma$ by its successor into $B_\delta$. $\gamma$ cannot pick $x$ as $x_\gamma$ before stage $s$ as $x$ would then be a used witness at stage $s$. Thus, $x_\gamma \neq x$. $\gamma$ cannot pick $x$ as $x_\gamma$ at some stage $t >s$ as $x_\gamma$ must be unused when chosen. The same reasoning shows that $x$ cannot be put into $B_\gamma$ by its successor as such a number comes from a lower priority $D$-node by the previous lemma.

If $\gamma$ is working on a $S$-strategy, it only puts elements into $B$ due to the $(\sharp)$ requirement. If $x_\alpha \in V$, $\gamma$ would either reset $x_\alpha$ by step (2) or (3) in $S$'s strategy, set $x_\gamma = x_\alpha$ or have already defined $x_\gamma \neq x_\alpha$.

If $\gamma$ is working on a $Q$-strategy, it either puts elements into $B$ at step (4) or puts elements from $M_\gamma$ into $B_\delta$ or $B$ at step (3). At step (4), $\gamma$ only puts unused elements into $B$ to make $B$ recursive on the complement of $R_\gamma$. If an arbitrary $y$ goes into $M_\gamma$, it must enter $B_\gamma$ first. By the previous lemma, $y$ must be a $x_\eta$ for a lower priority $D$-node $\eta$. $\eta$ cannot pick $x$ as $x_\eta$ before stage $s$ as $x$ would be a used witness at stage $s$. Thus $x_\eta \neq x$ by assumption. $\eta$ cannot pick $x$ at some stage $t >s$ as $x_\eta$ must be unused when chosen.
\end{proof}

\begin{lemma} Let $\alpha$ be a node on the true path and let $s$ be the least stage such that the true path does not go to the left of $\alpha$ after stage $s$. $P_\alpha$ is either infinite at stage $s$ or for infinitely many stages $t >s$, there are be unused elements added to $P_\alpha$ at stage $t$.
\end{lemma}
\begin{proof}
By induction and inspection of strategies.
If $\alpha$ is working on a $D$-requirement or $S$-requirement, $P_\alpha = P_\beta \setminus x_\alpha$  or $= P_\beta$.

If $\alpha$ is working on a $Q$-requirement and its outcome is $c$, $P_\alpha = R_\alpha$. $\alpha$ adds elements to $R_\alpha$ infinitely often at step (1) and does not reset $R_\alpha$ after $s$. If the outcome for $\alpha$ is $\infty$, $\alpha$ adds an element to $P_\alpha^\infty$ infinitely often at step (3)(b). Since $P_\alpha = P_\alpha^\infty$ for the $\infty$ outcome, we are done.
\end{proof}

\begin{lemma}Each $D_i$ requirement is satisfied.
\end{lemma}
\begin{proof}
Let $\alpha$ be a node on the true path that is working on the $D_i$ requirement and by Lemma 3.2, let $s$ be the least stage such that $x_\alpha$ is not reset.
If $\Delta_i(x_\alpha)$ does not converge or does not equal 0, the $x_\alpha$ does not enter $B$ by Lemma 3.4 and the requirement is satisfied.

Now assume that $\Delta_i(x_\alpha)$ converges and is equal to 0. In the construction, the $\alpha$-strategy puts $x_\alpha$ into $B_\gamma$ where $\gamma$ is its predecessor. As $x_\alpha$ does not reset after stage $s$, $x_\alpha$ is put into $B$ by Lemma 3.3 and the requirement is satisfied.
\end{proof}

\begin{lemma} Either every $S_V$ requirement is satisfied (and $X$ is nonspeedable) or $Y$ is nonspeedable.
\end{lemma}
\begin{proof}
Suppose that $c$ appears on the true path. Let $\alpha$ be the node on the true path whose outcome is $c$ and let $Q_h$ be the requirement that $\alpha$ is working on. As $\alpha$'s outcome is $c$, cofinitely many $x$'s in $M_\alpha$ enter $B$ before $h(x,\Phi_{M_\alpha}(x))+1$. Thus, higher priority requirements $S_{V_i}$ must grab cofinitely many elements of $M_\alpha$ and put them into $B$ before $h(x,\Phi_{M_\alpha}(x)) + 1$ due to ($\sharp$). In this case, we have the following equation: $B \cap R_\alpha = M_\alpha \cap R_\alpha = (V_0 \cup \cdots \cup V_i) \cap R_\alpha = X \cap R_\alpha$ (modulo finitely many elements). The last equality comes from (1) in the strategy for $S_V$ (\ie we commit to $(\sharp)$ only if there is no $x$ in both $V$ and $Y$). By inspection of the $Q_h$ strategy, we see that $B$ is made recursive outside of $R_{\alpha}$ by step (4). $Y$ is a split of this recursive part of $B$ and thus it is recursive. Therefore $Y$ is nonspeedable.

Suppose that $c$ does not appear on $f$. By construction of the tree, for every $S_V$ requirement, there is a node on the true path working it. By inspection of the strategy for $S_V$, either one of (1) - (4) holds or it commits to $(\sharp)$ for cofinitely many stages in the construction. If it commits to $(\sharp)$, we have that whenever $x$ enters $V$, $x$ enters $B$ immediately. Therefore $g(x,s) = g(x, \Phi_V(x)) = \Phi_X(x)$ by the definition of $g$ and thus $S_V$ is satisfied.

If (1) holds, $S_V$ is satisfied as $V \cap Y \neq \emptyset$ and $X \cap Y = \emptyset$. Thus $V \nsubseteq X$.

If (2) or (3) holds, a number in $V$ is permanently kept out of $B$ by the proof of Lemma 3.5. Thus $V \nsubseteq B$ and so $V \nsubseteq X$ and the requirement is satisfied.

If (4) holds, a number in $V$ is permanently kept out of $B$ by Lemma 3.5. Thus $V \nsubseteq B$ and so $V \nsubseteq X$ and the requirement is satisfied.
\end{proof}

\begin{lemma} Each $Q_h$ requirement is satisfied.
\end{lemma}
\begin{proof}
Let $\alpha$ be the first node on the true path $f$ that is working on $Q_h$.

If $c$ does not appear on $f$, each node $\alpha$ that is working on a $Q_h$ strategy has outcome $\infty$ (or $h$) and thus there are infinitely many $x$'s that are kept out of $B$ until stage $h(x,\Phi_{M_\alpha}(x))+1$ or $h$ is partial. Therefore, each $Q_h$ is satisfied.

Suppose that $c$ appears on $f$. If $\alpha$ appears before the $c$ outcome and its successor is not $c$, we know its outcome is $\infty$ or $h$ and thus its requirement is satisfied. If $\alpha$'s outcome is $c$, the next node $\gamma$ on the true path works on the same requirement $Q_h$. Now suppose $x$ enters $M_\gamma$ at stage $s$. Such an $x$ must exists. If no $x$ ever enters $M_\gamma$ from some point on, $B$ would be recursive contradicting Lemma 3.6. Let $\eta \leq_L \alpha$ be working on a $S$-requirement. $\eta$ cannot lie below the $c$ outcome by construction of the priority tree. If $\eta$ lies to the left or above the $c$ outcome, $\eta$ either restrains $x$ permanently or has already restrained another element out permanently by (3)(b) in the strategy for $S_V$. By the proof of Lemma 3.3, there are only finitely many such active $\eta$'s that restrain numbers from $M_\gamma$ and each $\eta$ only restrains one number. If $x \in M_\gamma$ eventually enters $B$, it enters $B$ at stage $\geq$ the stage it enters $B_\delta$ (where $\delta$ is $\gamma$'s predecessor) by Lemma 3.3. Thus $\Phi_B(x) > h(x,\Phi_{M_\gamma}(x))$ for infinitely many $x$'s. By Lemma 3.2, $M_\gamma \subseteq^\ast B$ so $Q_h$ is satisfied. If $\alpha$ appears after $\gamma$, the same reasoning as the previous case shows that $\Phi_B(x) > h(x,\Phi_{M_\alpha}(x))$ for infinitely many $x$'s and $M_\alpha \subseteq^\ast B$ by Lemma 3.2.
\end{proof}

This ends the verification for the one split case.

In the one split case, we are either successful at making $X$ nonspeedable by satisfying $S$-requirements or successful at concluding that $Y$ is nonspeedable from the failure of an $Q_h$ at satisfying its requirement using $M_\alpha$. When working with more than one split, we would like to conclude that one of $Y_0$, $Y_1$,... is nonspeedable in the latter case. However, there could be $V_i$'s for different pairs of splits (\eg we could have various $V_i$'s for $X_0$ and various $V_j$'s for $X_1$) so the equation in $R_\alpha$ could involve more than one $X$ (\eg $X_0$, $X_1$, $X_2$).

To overcome this difficulty, we switch which side of the split we are making nonspeedable (\eg from $X_i$ to $Y_i$). Let $Z_i$ be the side of the split we are making nonspeedable. In the case that $\alpha$ fails to satisfy its requirement, switching $Z_i$ will give us some progress on higher priority requirements. In the proof of the one split case, having the equation: $B \cap R_\alpha = M_\alpha \cap R_\alpha = (V_0 \cup \cdots \cup V_i) \cap R_\alpha = X \cap R_\alpha$ (modulo finitely many elements) gave us progress on higher priority requirements by allowing us to conclude that $Y$ is nonspeedable. As equations of such form are important for showing that we have made progress on higher priority requirements for the two split and general case, we give the following definition:

\begin{definition} By \emph{equation in $R_\alpha$}, $B = M_\eta = V_{i_0} \cup ... \cup V_{i_k} = Z_{l_0} \cup ... \cup Z_{l_n}$ (where $\alpha$, $\eta$ are arbitrary nodes and $i_0$,...,$i_k$ and $l_0,...,l_n$ are arbitrary numbers), we mean that the following holds: $B \cap R_\alpha = M_\eta \cap R_\alpha = (V_{i_0} \cup ... \cup V_{i_k})  \cap R_\alpha = (Z_{l_0} \cup ... \cup Z_{l_n}) \cap R_\alpha$.
\end{definition}
If the context is clear, we may drop ``in $R_\alpha$.".

\end{proof}

\section{Two splits}
In this section, we work with the case where we are recursively given two splits: $X_0, Y_0$ and $X_1, Y_1$. We refer to $Z_i$ as half of the split we are making nonspeedable.

Recall that in the one split case, we are either successful at making $X$ nonspeedable by satisfying $S$-requirements or successful at concluding that $Y$ is nonspeedable from the failure of an $Q_h$ at satisfying its requirement using $M_\alpha$ and the equation in $R_\alpha$ (see Definition 3.9) that its failure implies. However when working with two splits, there could be $V_i$'s for different pairs of splits (\eg we could have various $V_i$'s for $X_0$ and various $V_j$'s for $X_1$) so the equation in $R_\alpha$ could involve $X_0$ and $X_1$.

To overcome this obstruction, we use the following idea: If $M_\alpha$ fails to satisfy its requirement (and we have an equation in $R_\alpha$ as before \eg $X_0 \cup X_1 = B$), we switch $Z_1$ from $X_1$ to $Y_1$. If another $M_\gamma$ fails to satisfy its requirement where $\gamma \supseteq \alpha$, we have an equation in $R_\gamma$, \eg $X_0 \cup Y_1 = B$. We also switch $Z_0$ from $X_0$ to $Y_0$ and reset $Z_1$ back to $X_1$. From these equations together and since $R_\gamma \subseteq R_\alpha$, we can conclude the following equation in $R_\gamma$: $X_0 = B$. If we continue to see more nodes $\nu \supseteq \gamma$ fail to satisfy its requirement using $M_\nu$, our method of switching the value of $Z_i$ lets us conclude $Y_0 = B$ in some $R_\eta \subseteq R_\gamma$ (for some node $\eta$). This will allow us to prove that only a fixed number of switchings can occur. These switchings will be explained in detail in the proof.

\begin{lemma} For any two recursive functions $f, g$, there is a speedable set $B$ such that if $X_0, Y_0$ is the split of $B$ given by $f$ and $X_1, Y_1$ is a split of $B$ given by $g$ then for each $i$, at least one of $X_i$ or $Y_i$ is nonspeedable.
\end{lemma}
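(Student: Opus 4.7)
The plan is to modify the construction of Lemma 3.1 by carrying along each node $\alpha$ of the priority tree an extra designation $(Z_0^\alpha, Z_1^\alpha) \in \{X_0, Y_0\} \times \{X_1, Y_1\}$ recording which half of each split the $S$-requirements at $\alpha$ are trying to make nonspeedable. We duplicate the $S$-requirements into flavours $S^{(0)}_V$ and $S^{(1)}_V$, where $S^{(i)}_V$ is the requirement of Lemma 3.1 applied with $Z_i^\alpha$ in the role of $X$; the $Q$- and $D$-requirements are inherited. The priority tree interleaves $Q$-, $D$-, $S^{(0)}$- and $S^{(1)}$-requirements, and at the root we set $(Z_0, Z_1) = (X_0, X_1)$.

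The new mechanism is a switching rule: whenever the construction commits to the $c$ outcome at a $Q$-node $\alpha$, we reset every $S^{(i)}$-node in the subtree through $\alpha^\smallfrown\langle c\rangle$ and update the designation along that subtree according to the prescribed sequence beginning $(X_0, X_1) \to (X_0, Y_1) \to (Y_0, X_1) \to \cdots$ (switch $Z_1$, then switch $Z_0$ and reset $Z_1$, and so on). Only a bounded number of switches can occur along the true path before we derive enough information to handle both splits directly.

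Inside each subtree below a switching node, the verification reuses Lemmas 3.2--3.7 essentially unchanged, since the local pool and restraint invariants depend on $Z_i^\alpha$ only through the statement of the $S^{(i)}$-requirements. The new content is the combinatorial analysis of $c$-outcomes on the true path. Let $\alpha_1 \subsetneq \alpha_2 \subsetneq \alpha_3$ (as far as they exist) be the successive $Q$-nodes on the true path whose true outcome is $c$. Applying the argument of Lemma 3.8 at each $\alpha_j$ yields an equation in the sense of Definition 3.9: $Z_0^{\alpha_j} \cup Z_1^{\alpha_j} = B$ in $R_{\alpha_j}$, because each $S^{(i)}$-node between the last switch and $\alpha_j$ that committed to $(\sharp)$ contributes a $V \subseteq Z_i^{\alpha_j}$. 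Since $R_{\alpha_{j+1}} \subseteq R_{\alpha_j}$, these equations persist in the smaller pool, and combining them using the disjointness of $X_k$ and $Y_k$ on the switched coordinate kills one side. Concretely, $\alpha_2$'s equation (designation $(X_0, Y_1)$) forces $X_1 \cap R_{\alpha_2} = \emptyset$, and combined with $\alpha_3$'s equation (designation $(Y_0, X_1)$) it forces $X_0 \cap R_{\alpha_3} = \emptyset$. The Myhill argument of Lemma 3.8, using that $B \setminus R_{\alpha_j}$ is made recursive by the $Q$-strategy at $\alpha_j$, then delivers $X_1$ and subsequently $X_0$ as recursive, hence nonspeedable.

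Putting this together, if the true path visits at most two $c$-outcomes at $Q$-nodes, the designation stabilises to some $(Z_0^*, Z_1^*)$ past the last switch, and the proofs of Lemmas 3.6 and 3.7 apply once for each index $i \in \{0,1\}$ with $Z_i^*$ in the role of $X$, making both $Z_0^*$ and $Z_1^*$ nonspeedable; hence one half of each split is nonspeedable. If there are three or more $c$-outcomes, the combinatorial analysis above already delivers a recursive, hence nonspeedable, half of each split. The $Q$- and $D$-requirements for $B$ are satisfied inside each subtree by the arguments of Lemmas 3.6 and 3.8 applied locally. The main obstacle I am glossing over is verifying that the equation at $\alpha_j$ really involves only the current designation: this is exactly what resetting the $S^{(i)}$-nodes at each switch is designed to guarantee, so that the $V$'s contributing to $(\sharp)$ at $\alpha_j$ all come from $S^{(i)}$-nodes working against $(Z_0^{\alpha_j}, Z_1^{\alpha_j})$.
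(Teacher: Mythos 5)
Your plan is structurally the same as the paper's: carry along each node a designation $(Z_0^\alpha,Z_1^\alpha)$, switch on a $c$-outcome according to the cycle $(X_0,X_1)\to(X_0,Y_1)\to(Y_0,X_1)\to(Y_0,Y_1)$, derive one equation in $R_{\alpha_j}$ per $c$-outcome, and use the disjointness $X_k\cap Y_k=\emptyset$ to bound the number of switches. That is exactly the paper's architecture (Figures 2 and 3, Lemma 4.3), so there is no genuine divergence of approach. However, the concrete combinatorics are off, and one supporting mechanism is glossed over.

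First, the deductions drawn from the equations are stated incorrectly. From $X_0\cup X_1=B$ in $R_{\alpha_1}$ and $X_0\cup Y_1=B$ in $R_{\alpha_2}\subseteq R_{\alpha_1}$ you get $(X_0\cup X_1)\cap(X_0\cup Y_1)=X_0\cup(X_1\cap Y_1)=X_0$, hence $X_0=B$ in $R_{\alpha_2}$. Since $X_0$ and $Y_0$ are disjoint and $Y_0\subseteq B$, this yields $Y_0\cap R_{\alpha_2}=\emptyset$ --- not your claimed $X_1\cap R_{\alpha_2}=\emptyset$ (the latter does not follow: $X_1\cap R_{\alpha_2}\subseteq X_0\cap R_{\alpha_2}$, but $X_0$ and $X_1$ need not be disjoint). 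Similarly, combining with $\alpha_3$'s equation $Y_0\cup X_1=B$ in $R_{\alpha_3}$ and $Y_0\cap R_{\alpha_3}=\emptyset$ gives $X_1=B$ in $R_{\alpha_3}$ and hence $Y_1\cap R_{\alpha_3}=\emptyset$, not $X_0\cap R_{\alpha_3}=\emptyset$. Consequently the sets you conclude to be recursive should be $Y_0$ and $Y_1$, not $X_1$ and $X_0$.

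Second, you assert that ``only a bounded number of switches can occur'' but do not prove it. The paper's Lemma 4.3 shows this by deriving a contradiction from a hypothetical fourth $c$-outcome: from equations at $\alpha_1,\alpha_2$ one obtains $X_0=B$ in $R_{\alpha_4}$ and from $\alpha_3,\alpha_4$ one obtains $Y_0=B$ in $R_{\alpha_4}$; then any element a $D$-strategy below the fourth $c$-outcome forces into $B\cap R_{\alpha_4}$ would have to be in both $X_0$ and $Y_0$, contradicting disjointness. Without this bound, the $Q_h$-requirements are not guaranteed an $\infty$ outcome on the true path, so speedability of $B$ is in jeopardy. Third, ``reset every $S^{(i)}$-node in the subtree'' is not enough: once a switch changes the designation from $(X_0,X_1)$ to, say, $(X_0,Y_1)$, the $S^{(1)}_V$-requirements that were handled above the switch (against $X_1$) have to be re-assigned fresh nodes below the switch working against $Y_1$, or else $Y_1$ is never made nonspeedable. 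The paper manages this with the bookkeeping lists $L_1(\alpha)$ and $L_2(\alpha)$ and proves in Lemma 4.5 that $L_2$ eventually empties, so that every requirement appears on the true path with the stable designation. You acknowledge gliding over details here, but this re-assignment machinery is load-bearing and needs to be supplied.
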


\begin{proof} The proof is by a tree construction like in the one split case.
\subsection*{Requirements}
To make $B$ speedable, we have similar requirements $Q_h$ as in the one split case:
\[Q_{h} : (\exists M \subseteq^* B)(\exists^\infty x)(\Phi_B(x) > h(x,\Phi_M(x)) \vee \text{ switch some } Z_i \]
Switching some $Z_i$ is a form of progress on higher priority strategies just as concluding that $Y$ is nonspeedable was a form of progress for the one split case.

To make $X_i$ or $Y_i$ nonspeedable, we have the following requirements:
\[S_i : (\exists g_i)(\exists x)(x \in V \wedge x \notin Z_i) \text{ or } [x \in Z_i \Rightarrow \Phi_{Z_i}(x) \leq g_i(x,\Phi_V(x))] \]
and its subrequirements, where $g_i$ is as defined below:
\[S_{i, V}: (\exists x)(x \in V \wedge x \notin Z_i) \text{ or } [x \in Z_i \Rightarrow \Phi_{Z_i}(x) \leq g_i(x,\Phi_V(x))] \]
Let $g_i(x,s)$ be the least $t$ that $x$ enters $X_i$ or $Y_i$ if $x$ is in $B_s$. Otherwise, we define $g_i(x,s)$ to be 0. Observe that $g$ is total and recursive. We also require $S_i$ to have a higher priority than $S_j$ iff $i <j$. We occasionally drop the $i$ subscript if the context is clear.

We refer to $S_i$ as the parent requirement of $S_{i,V}$.

Again, we have the $D_i$ requirements as before to help the $Q$-requirements.

\subsection*{Strategies}

The strategies for $Q$-, $S$- and $D$-requirements are the same as before except for the following differences:
\begin{enumerate}
\item Each $\alpha$ node has a current $Z_i^\alpha$ that it is working on. Each $Z_i^\alpha$ is initially defined to be $X_i$ but may switch to $Y_i$ in the course of the construction.
\item The strategy for $S_{i,V}$ is the same as the strategy for $S_V$ in the one split case except that instead of $X$ and $Y$, we have $Z_i^\alpha$ and the other half of the $i^{th}$ pair.
\end{enumerate}

\subsection*{Outcomes}
Each node $\alpha$ working on a $D$-strategy has outcomes: $a$ (for acts) and $d$ (for diverge). On the $a$ outcome, $\Delta_i(x)$ converges and equals to 0 and $\alpha$ would like to put $x$ into $B$. Along the $d$ outcome, either $\Delta_i(x)$ diverges or is not equal to 0.

\medskip
Each node $\alpha$ working on a $Q$-strategy has outcomes: $\infty$, $c$ and $h$. Along the $\infty$ outcome, infinitely many $x$'s in $M_\alpha$ enter $B$ after stage $h(x,\Phi_{M_\alpha}(x))+1$. Along the $c$ outcome, cofinitely many $x$'s in $M_\alpha$ enter $B$ before $h(x,\Phi_{M_\alpha}(x))+1$ and we switch $Z_i$. Along the $h$ outcome, we see that $h$ is partial.

\medskip

Each node $\alpha$ working on a $S_i$-strategy has a blank outcome. In the general case, it is important to keep track of the overall timing of when $x$'s from $V$'s enter $B$ due to actions of $S_{i,V}$ strategies. Here, $S_i$ keeps track of the state of its child nodes.

Each node $\alpha$ working on a $S_{i,V}$-strategy has outcomes: $k$ and $s$. Along the $k$ outcome, either there is some $x$ in both $V$ and $Y$ or $\alpha$ can keep some element in $V$ out of $B$. Along the $s$ outcome, $\alpha$ cannot achieve $(\exists x)(x \in V \wedge x \notin X)$ and we try to achieve $x \in X \Rightarrow \Phi_X(x) \leq g(x, \Phi_V(x))$ for all $x$.

\subsection*{Switchings of $Z_0^\alpha$ and $Z_1^\alpha$}
\begin{figure}
\begin{center}
   \begin{tabular}{| c || c | c | c | c |}
    \hline
    Configuration & Node & Value of $Z_0$ & Value of $Z_1$ & Equation in $R_{node}$ \\ \hline
    0 & before $\beta$ & $X_0$ & $X_1$ & no equation by assumption \\ \hline
    1 & $\beta$ & $X_0$ & $Y_1$ & $X_0 \cup X_1 = B$ \\ \hline
    2 & $\alpha$ & $Y_0$ & $X_1$ & $X_0 \cup Y_1 = B$ \\ \hline
    3 & $\delta$ & $Y_0$ & $Y_1$ & $Y_0 \cup  X_1 = B$ \\ \hline
    4 & $\gamma$ & $Y_0$ & $Y_1$ & $Y_0 \cup Y_1 = B$ \\ \hline
    \end{tabular}
\end{center}
\caption{A table summarizing the possible switching that can occur for the two split case. The ``Value of $Z_0$ (or $Z_1$)" for configuration $n$ refers to the value of $Z_0^\nu$ ($Z_1^\nu$) for nodes $\nu$ between the node for configuration $n$ and the node for configuration $n + 1$. An equation in $R_{node}$ for configuration $n$ refers to $R_\eta$ where $\eta$ is the value of the node for configuration $n$.}
\end{figure}
\begin{figure}
\begin{tikzpicture}[level distance=2cm,
   edge from parent path={(\tikzparentnode) -- (\tikzchildnode)}]
\Tree [.\node {$\beta$ with $c$ outcome}; \edge[dashed] node[auto=right] {Nodes in between obey Config. 1.}; [.\node {$\alpha$ with $c$ outcome}; \edge node[auto=right] {$\infty$};
    [.\node {$S_{V_1}$ with Config. 1}; ]
     \edge node[auto=right] {$c$}; [.\node {$\Pi_2$ and $\Sigma_2$}; \edge[dashed] node[auto=right] {Nodes obey Config. 2}; [.\node {$\delta$ with $c$ outcome}; \edge[dashed] node[auto=right] {Nodes obey Config. 3}; [.\node {$\gamma$}; ]]]\edge node[auto=left] {$h$};  [.\node {node with Config. 1};] ]]]
\end{tikzpicture}
\caption{In this figure, we assume that there is no node with true outcome $c$ above $\gamma$ except for $\beta, \alpha$ and $\delta$. Assuming that this figure signifies a portion of the true path, the dashed edges between nodes represents the true path in between the two nodes.}
\end{figure}
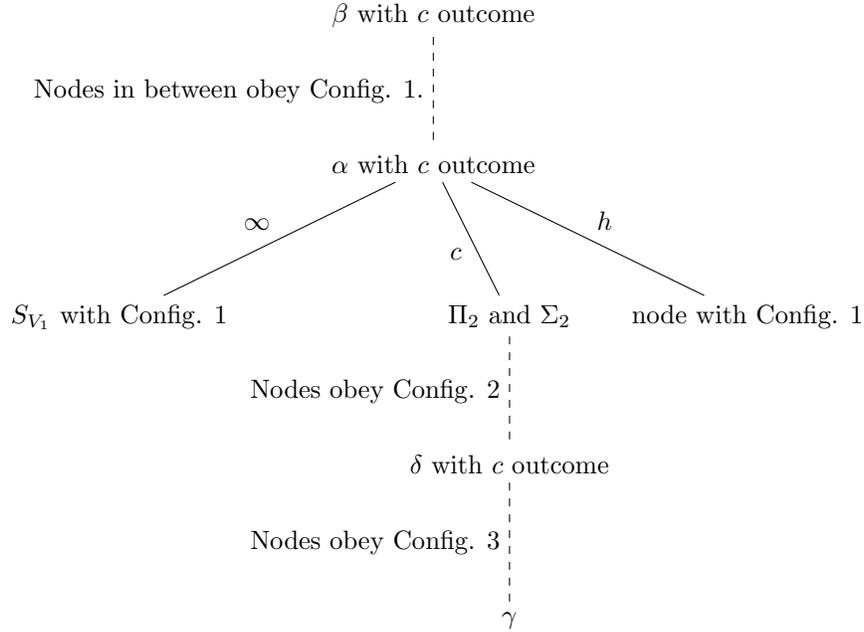
The new ingredient in the two split case that goes beyond the one split case is our way of assigning $Z_i^\alpha$ to each node $\alpha$ so that if we get an equation in $R_\alpha$ from the $c$ outcome, the equation in $R_\alpha$ gives us progress on higher priority strategies. In the one split case, we proved that we could eventually get $Y$ to be recursive. Here, the situation is more complicated and we switch the $Z_0$ and $Z_1$ so that we can obtain a contradiction from getting too many $c$ outcomes for different nodes on the same path.

The main idea is the following: A $c$ outcome for a $Q$-node causes a switch of the highest indexed $Z_i$ that is equal to $X_i$ and resets all higher indexed (for $k > i$) sets $Z_k$ to $X_k$. If such a $Z_i$ does not exist, we do not switch anything. We will prove in the verification that such a $Z_i$ will always exist for $c$ outcomes on the true path (see Lemma 4.3). Switching at some node $\alpha$ effects $Z_i^\gamma$ for $\gamma \supseteq \alpha$ unless a switching occurs at a later node $\nu \supseteq \alpha$.

Now, we assume that $\beta \subseteq \alpha \subseteq \delta \subseteq \gamma$ are on the true path with true outcome $c$ (as in Figure 3 above). We also assume that there is no other $Q_h$ requirement node with true outcome $c$ between them and $\beta$ is the first node on the true path working on a $Q_h$-requirement with true outcome $c$ and so $Z_0^\beta$ and $Z_1^\beta$ are equal to their initial values, $X_0$ and $X_1$ respectively. In other words, we assume that $\beta, \alpha, \delta, \gamma$ are the first, second, third and fourth node (respectively) on the true path with true outcome $c$.  The possibilities are the following (summarized in Figure 2 above):
\begin{enumerate}
\item As $c$ is the true outcome of $\beta$, we have the following equation in $R_\beta$ as in the one split case: $X_0 \cup X_1 = B$. We switch $Z_1$ from $X_1$ to $Y_1$.
\item As  $c$ is the true outcome of $\alpha$, we have the following equation in $R_\alpha$ as in the one split case: $X_0 \cup Y_1 = B$. We switch $Z_0$ from $X_0$ to $Y_0$ and reset $Z_1$, \ie switch $Z_1$ from $Y_1$ back to $X_1$.
\item As  $c$ is the true outcome of $\delta$, we have the following equation in $R_\delta$ as in the one split case: $Y_0 \cup X_1 = B$. We switch $Z_1$ from $X_1$ to $Y_1$.
\item (cannot occur on the true path) As $c$ is the true outcome of $\gamma$, we have the following equation in $R_\gamma$ as in the one split case: $Y_0 \cup Y_1 = B$. We do not switch anything.
\end{enumerate}

Possibility (4) cannot occur due to the following reasoning: Suppose for a contradiction that possibility (4) occurs. As $\beta \subseteq \alpha \subseteq \delta \subseteq \gamma$, we have $R_\beta \supseteq R_\alpha \supseteq R_\delta \supseteq R_\gamma$. Therefore,  we have the following equations in $R_\gamma$: $X_0 \cup X_1 = B$, $X_0 \cup Y_1 = B$, $Y_0 \cup X_1 = B$ and $Y_0 \cup Y_1 = B$. The first two equations implies the following equation in $R_\gamma$: $X_0 = B$. The last two equations implies the following equation in $R_\gamma$: $Y_0 = B$. The satisfaction of some $D_i$ requirement on a node after $\gamma$ puts some element $x$ into $B \cap R_\gamma$ as requirements being satisfied by nodes on the true path after $\gamma^\smallfrown \langle c \rangle$ only take witnesses from $R_\gamma$. However, this would mean that $x$ enters both $X_0$ and $Y_0$ by the two equations in $R_\gamma$: $X_0 = B$ and $Y_0 = B$. This is a contradiction because $X_0$ and $Y_0$ are disjoint as $X_0$ and $Y_0$ form a split of $B$.
\subsection*{Priority Tree}
Fix a recursive ordering of the $D$-, $Q$- and $S$-requirements, respectively, where $S_i$ is of higher priority of than $S_{i,V}$.

Let $\Lambda = \{\infty, c, h,k,s,a,d\}$ with ordering $\infty < c < h<k <s <a <d$. The tree is a subset of $\Lambda^{<\omega}$.

We define by recursion a function $G$ such that $G$ assigns to each $\alpha$ the list of $Z_0^\alpha$ and $Z_1^\alpha$ it is working on in the construction (\eg $G(\alpha) = (X_0, Y_1)$). We also keep track of two lists $L_1(\alpha)$ and $L_2(\alpha)$ for each node. $L_1$ keeps track of the $S$-requirements that have appeared so far (but gets reset at every appearance of a $c$ outcome) and $L_2$ keeps track of $S$-requirements that need to be repeated and we remove a requirement from this list once it has been repeated.

We assign requirements to nodes as well as define $G, L_1, L_2$ by recursion at the same time.
For the empty node, assign the highest priority $Q_h$ requirement to $\alpha$. Let $G(\emptyset) = (X_0, X_1)$ and let $L_1(\emptyset) = L_2(\emptyset) = \emptyset$. Suppose that we have assigned a requirement to $\beta = \alpha \upharpoonright (|\alpha| -1)$, which we will call the $\beta$ requirement and defined $G(\beta), L_1(\beta)$ and $L_2(\beta)$. We now assign a requirement to $\alpha$ and define $G(\alpha), L_1(\alpha)$ and $L_2(\alpha)$.

\subsubsection*{Defining $G(\alpha)$}
Ask whether $\beta$ is a node working on some $Q_h$ with successor $c$. If not, let $G(\alpha)$ be $G(\beta)$. If so, let $k$ be $1$ if $X_1$ appears in $G(\beta)$ and let $k$ be $0$ otherwise. We have the following possibilities:

\begin{enumerate}
\item If $k = 0$ and $Y_0$ appears in $G(\beta)$, define $G(\alpha)$ to be $G(\beta)$.
\item If $k = 0$ and $X_0$ appears in $G(\beta)$, define $G(\alpha)$ to be $(Y_0, X_1)$.
\item If $k = 1$, define $G(\alpha)$ to be $G(\beta)$ with $X_1$ switched to $Y_1$, \ie if $G(\beta) = (Z^\prime, X_1)$ then define $G(\alpha)$ to be $(Z^\prime, Y_1)$.
\end{enumerate}

\subsubsection*{Defining $L_1(\alpha)$ and $L_2(\alpha)$}
Ask whether $\beta$ is a node working on some $Q_h$ with successor $c$. If so, let $L_2(\alpha) = L_1(\beta)$ and set $L_1(\beta) = \emptyset$. Otherwise, ask whether $\beta$ is a node working on some $S$-requirement. If so, let $L_1(\alpha)$ be $L_1(\beta)$ with $\beta$'s requirement affixed at the end. If $L_2(\beta)$ is not empty and this $S$-requirement is in $L_2(\beta)$, let $L_2(\alpha)$ be $L_2(\beta)$ with this $S$-requirement removed. Otherwise, let $L_1(\alpha) = L_1(\beta)$ and let $L_2(\alpha) = L_2(\beta)$.

\subsubsection*{Assigning a requirement to $\alpha$}
If the $\beta$-requirement is an $Q$-requirement with outcome $c$, assign the $\beta$-requirement to $\alpha$ and let its successors be $\infty$, $c$ and $h$.

Otherwise, check if $L_2(\alpha)$ is empty or not. If $L_2(\alpha)$ is not empty, assign the highest priority $S$ requirement in $L_2(\alpha)$ to $\alpha$. If this requirement is a $S_i$-requirement, let its successor be the blank outcome and if this requirement is a $S_V$-requirement, let its successors be $k$ and $s$.

If $L_2(\alpha)$ is empty, we assign a requirement to $\alpha$ based on the type of requirement assigned to $\beta$-requirement: If the $\beta$-requirement is a $Q$-requirement, assign the highest priority $D$-requirement that has not been assigned so far and let its successors be $a$ and $d$. If the $\beta$-requirement is a $D$-requirement, assign the highest priority $S$-requirement that has not been assigned so far and let its successors be $k$ and $s$ if it is a $S_V$ requirement and let its successor be the blank outcome if it is a $S_i$ requirement. Otherwise, assign the highest priority $Q$-requirement that has not been assigned so far and let its successors be $\infty$, $c$ and $h$.

\subsection*{Construction}
We call a node working on a $S$-strategy \emph{active} at stage $s$ if the node has acted at some stage $t <s$ and has not been cancelled (or reset).

At stage $s$, define $\delta_s$ (an approximation to the true path) by recursion as follows. Suppose that $\delta_s \upharpoonright e$ has been defined for $e <s$. Let $\alpha$ be the last node of $\delta_s \upharpoonright e$. We now define $\delta_s \upharpoonright (e+1) \supseteq \delta_s \upharpoonright e$.

\begin{enumerate}
\item If $\alpha$ is a $Q$-node, look to see if it is waiting for a number $x$ to go into $B_\beta$, \ie there is some $x$ in $M_\alpha$ that has not entered $B_\beta$. If so, look to see if an active node $\eta$ to to the left or below $\alpha^\smallfrown \langle \infty \rangle$ would like to put $x$ in (or keep $x$ out). Let $\eta$ act. If there are no remaining numbers that $\alpha$ is waiting on, let $\delta_s \upharpoonright (e+1) = (\delta_s \upharpoonright e)^\smallfrown \langle c \rangle$. Otherwise, look to see if there is some $z$ in the remaining numbers such that $h(z,\Phi_{M_\alpha}(z))$ has converged and our current stage is greater than $h(z,\Phi_{M_\alpha}(z))+1$. If so, let $\delta_s \upharpoonright (e+1) = (\delta_s \upharpoonright e)^\smallfrown \langle \infty \rangle$. If not and some $x$ was pulled, let $\delta_s \upharpoonright (e+1) = (\delta_s \upharpoonright e)^\smallfrown \langle c \rangle$. Otherwise, let $\delta_s \upharpoonright (e+1) = (\delta_s \upharpoonright e)^\smallfrown \langle h \rangle$.
\item If $\alpha$ is a $S_V$-node, look to see if $x_\alpha$ has been defined. If so, let $\delta_s \upharpoonright (e+1) = (\delta_s \upharpoonright e)^\smallfrown \langle k \rangle$. Otherwise, go through steps (1) - (4) in the strategy for $S_V$. If one of (1) - (4) holds and we can define $x_\alpha$, let $\delta_s \upharpoonright (e+1) = (\delta_s \upharpoonright e)^\smallfrown \langle k \rangle$. Otherwise, let $\delta_s \upharpoonright (e+1) = (\delta_s \upharpoonright e)^\smallfrown \langle s \rangle$. 
\item If $\alpha$ is a $D$-node, look to see whether $\Delta_i(x_\alpha)$ has converged and equals 0. If so, let $\delta_s \upharpoonright (e+1) = (\delta_s \upharpoonright e)^\smallfrown \langle a \rangle$. Otherwise, let $\delta_s \upharpoonright (e+1) = (\delta_s \upharpoonright e)^\smallfrown \langle d \rangle$.
\end{enumerate}

Reset nodes to the right of $\delta_s$ and let active $S$-nodes to the left of $\delta_s$ or below the $\infty$ outcome for one of the nodes in $\delta_s$ act. At substage $t \leq s$, let $\delta_s \upharpoonright t$ act according to its description in the strategies above, \ie $\delta_s \upharpoonright t$ enumerates numbers in its sets and extends their definitions.

\subsection*{Verification} Recall that the true path denotes the leftmost path traveled through infinitely often by the construction.

\begin{lemma}
For every node $\alpha$, if $x$ is an unused witness in $P_\beta$ and is not reset as a witness, $x$ does not go into $B$ while $\alpha$ is restraining $x$. For every node $\alpha$, if $x_\alpha = x_\gamma$ for another node $\gamma$, we can successfully keep $x_\alpha$ out of $B$. If $\alpha$ is on the true path, $M_\alpha \subseteq^\ast B$. If $\alpha$ is on the true path, we only reset its witness finitely many times.
\end{lemma}
\begin{proof}
The proof follows the same reasoning as in the one split case.
\end{proof}

\begin{lemma} There cannot be more than three nodes with true outcome $c$ on the true path.
\end{lemma}
\begin{proof}
Suppose otherwise and let $\beta \subseteq \alpha \subseteq \delta \subseteq \gamma$ be the first four nodes on the true path with outcome $c$. By our scheme of switching between $X$ and $Y$, we obtain the following equations: $X_0 \cup X_1 = B$ (in $R_\beta$), $X_0 \cup Y_1 = B$ (in $R_\alpha)$, $Y_0 \cup X_1 = B$ (in $R_\delta)$ and $Y_0 \cup Y_1 = B$ (in $R_\gamma)$. By our choice of $\beta, \alpha, \delta$ and $\gamma$, we have $R_\beta \supseteq R_\alpha \supseteq R_\delta \supseteq R_\gamma$ and thus all of these equations are true in $R_\gamma$. The first two equations give the following equation in $R_\gamma$: $X_0 = B$ and the last two equations give the following equation in $R_\gamma$: $Y_0 = B$. At some stage, the satisfaction of some $D_i$ requirement puts some element $x$ into $B \cap R_\gamma$ as requirements being satisfied by nodes on the true path after the $c$ outcome of $\gamma$ only take witnesses from $R_\gamma$. However, this would mean that $x$ enters both $X_0$ (from the equation in $R_\gamma$: $X_0 = B$) and $Y_0$ (from the equation in $R_\gamma$: $Y_0 = B$). As $X_0$ and $Y_0$ are disjoint, we have obtained our contradiction.
\end{proof}

\begin{lemma} We only switch $Z_0$ and $Z_1$ finitely many times on the true path.
\end{lemma}
\begin{proof} This follows immediately from the previous lemma as the construction does not switch the value of $Z_0$ or $Z_1$ unless it meets a $c$ outcome.
\end{proof}

\begin{lemma} On the true path, for every requirement, there is a node that works on it.
\end{lemma}
\begin{proof} It suffices to show that there exists an $\alpha$ on the true path such that for all $\nu \supseteq \alpha$ on the true path, $L_2(\alpha)$ is empty. Let $\gamma$ be an arbitrary node and let $\delta$ be the successor of $\gamma$. By construction of $L_2$, either $L_2$ is a finite set or is empty. $L_2(\gamma)$ is not empty if and only if $L_2(\delta)$ is not empty or $\delta$ is a node working on a $Q_h$ requirement with outcome $c$. The latter case can only occur less than three times by Lemma 4.3. The former case only occurs finitely many times as the cardinality of the value of $L_2$ strictly decreases as we go down a path unless we met another node working on a $Q_h$ requirement with outcome $c$. However, there are only finitely many such nodes on the true path so the lemma follows by setting $\alpha$ to be the first node such that $L_2(\alpha)$ is empty and no nodes after $\alpha$ has the $c$ outcome on the true path.
\end{proof}

\begin{lemma}
For all $i$, the $D_i$ requirement is satisfied.
\end{lemma}
\begin{proof}
This follows by Lemma 4.2 and similar reasoning as in the one split case.
\end{proof}

\begin{lemma} $S_0$ and $S_1$ are both satisfied.
\end{lemma}
\begin{proof}
If we never switch the value of $Z_0$ to $Y_0$ on the true path, we never reset the $S_{0,V}$ strategies and as there is a node working on $S_{0,V}$ for every $V$ on the true path, the $S_0$ requirement is satisfied. If we do switch, we finish switching at some stage $s$ by Lemma 4.4. We do not reset the value of $Z_0$ for $S_0$ strategies again after we finish switching so there is a node working on $S_{0,V}$ with the final value of $Z_0$ for every $V$ on the true path and thus the $S_0$ requirement is satisfied by Lemma 4.5 and by our scheme of repeating $S_V$-requirements using list $L_2$. 

The same reasoning works for $S_1$ as well. By Lemma 4.4, $Z_1$ finishes switching at some stage $t$ by Lemma 4.4. We do not reset the value of $Z_1$ for $S_1$ strategies again after we finish switching so there is a node working on $S_{1,V}$ with the final value of $Z_1$ for every $V$ on the true path and thus the $S_1$ requirement is satisfied.
\end{proof}

\begin{lemma}  For all $h$, $Q_h$ is satisfied.
\end{lemma}
\begin{proof} This follows from Lemma 4.3. As there can only be three nodes on the true path with outcome $c$, some $\alpha$ on the true path working on $Q_h$ must get the $\infty$ outcome, \ie there are infinitely many elements in $M_\alpha$ that are kept out of $B$ until stage $h(x,\Phi_{M_\alpha}(x)) +1$.
\end{proof}
\end{proof}

\section{The general case}

In this section, we prove Theorem 2.3.

We would like to use the same line of argument as in the two split case where having too many equations (as in the sense of Definition 3.9) leads to a contradiction. Here, we are dealing with infinitely many splits. During the construction, various sets will be proven to be recursive and various switchings will occur. The worry is that this may fill up the whole universe and then there would be no way to make $B$ speedable or even nonrecursive. Lemma 5.5 is devoted to establishing that such a situation does not occur.

\begin{proof} [Proof of Theorem 2.3.]
We deal with all possible splits of $B$. Recursively order all pairs $X_i, Y_i$ of disjoint \re subsets of $B$. As before, we let $Z_i$ be the split we are currently trying to make nonspeedable. To start, $Z_i$ is $X_i$. For each node $\alpha$, the priority tree assigns which $Z_i$ $\alpha$ is working on.
\subsection*{Requirements}
We have $Q_h$ requirements like in the one split case:
\[Q_{h} : (\exists M \subseteq^* B)(\exists^\infty x)(\Phi_B(x) > h(x,\Phi_M(x)) \vee \text{ switch some } Z_i \]

Switching some $Z_i$ is a form of progress on higher priority strategies just as concluding that $Y$ is nonspeedable was a form of progress for the one split case.

We have $S$-requirements to make $Z_i$ nonspeedable:
\[S_{i}: (\exists g_i)(\forall \re V)\text{ either } V \not \subseteq Z_i \text{ or } [x \in Z_i \Rightarrow \Phi_{Z_i}(x) \leq g_i(x,\Phi_V(x))] \]
and their subrequirements, $S_{i,V}$:
\[S_{i,V}: (\exists x \in V) x \notin Z_i \text{ or } (\forall x)(x \in Z_i \Rightarrow \Phi_{Z_i}(x) \leq g_i(x,\Phi_V(x))) \]

We refer to $S_i$ as the parent requirement of $S_{i,V}$.

We define $g_i$ at the parent node working on $S_i$. We also require $S_i$ to have a higher priority than $S_j$ iff $i <j$. We occasionally drop the $i$ subscript if the context is clear.

Again, we have the $D_i$ requirements as before to help the $Q$-requirements.

\subsection*{Strategies}
Let $f_i(x,s)$ be the least $y$ greater than every $h(x,t)$, where $h$ belongs to a higher priority $Q_h$ that does not have the $h$ outcome and greater than every $g_j(x, t)$ where $g_j$ belongs to a higher priority $S_j$ for all $t\leq s$.
Let $g_i(x,s)$ be the least $y$ greater than $f_i(x,s)$ and also greater than the least stage $t$ that $x$ enters either $X_i$ or $Y_i$ if $x$ has entered $B_{f_i(x,s)}$. This definition of $g_i$ is needed in the verification to show that if $\alpha$ is a $Q$-node, child nodes for lower priority $S_i$ requirements cannot injure $\alpha$ (see first paragraph of the proof of Lemma 5.3).

The strategies for $Q_h$, $S_{i,V}$ and $D_i$ are the same as in the one and two split cases except that the $S_{i,V}$ strategy has the following differences:

\begin{enumerate}
\item Each node $\alpha$ has its version of the $Z_i^\alpha$, $f_i^\alpha$, $g_i^\alpha$, determined by the priority tree. Instead of $X$ and $Y$ in the strategy for $S_V$ in the one split case, we work on $Z_i^\alpha$ and the other side of the split for $S_{i,V}$.
\item The $(\sharp$) commitment we use for the general case in the $S_{i,V}$ requirement is:
\begin{equation}
\text{When } x \text{ enters } V \text{ at stage } s \text{, we put } x \text{ into } B \text{ at stage } f_i(x,s). \tag{$\sharp$}
\end{equation}
\end{enumerate}
\subsection*{Switching of the $Z_i^\alpha$ and reduction to the $n$-split case}
At an $Q$-node $\alpha$, we will prove in the verification that any equation obtained in $R_\alpha$ will only involve $Z_i$'s where $i$ is such that the $S_i$ requirement is assigned to a node above $\alpha$. We will only introduce new splits to be considered \emph{only} after $\alpha$'s requirement has been satisfied by a $\infty$ outcome (\ie until some node assigned to $\alpha$'s requirement gets the $\infty$ outcome, we do not assign any new $S_i$-requirement to any node on this path).

Let $\alpha$ be an arbitrary node on the tree working on some $Q_h$-requirement and let $Z_0,...,Z_n$ be a listing of $Z_i$'s such that the $S_i$ requirement is assigned to some node above $\alpha$.

The switching of the $Z_i^\alpha$ is similar to the two split case. A $c$ outcome for a $Q$-node causes a switch of the highest indexed $Z_i$ that is equal to $X_i$ and rests all higher indexed (for $k > i$) sets $Z_k$ and $X_k$. The reasoning behind this switching is to systematically switch so that if we just look at $Z_0,...,Z_l$ and all of the switches have occurred concerning $Z_0,...,Z_l$, we obtain equations $Z_0 \cup ... \cup Z_l = B$ for every combination of values for $Z_0,...,Z_l$. This is necessary for proving that we can make $B$ speedable.

A typical situation is as in Figure 4 below. There are nodes working on the same $Q_h$ requirement and as in the two split case, the $Q_h$ requirement is continually being assigned to nodes along this path until one of the nodes $\gamma$ working on this $Q_h$ requirement gets an $\infty$ outcome. Until $\gamma$ appears, all nodes appearing after $\alpha$ on this path are working on a same fixed number of splits as $\alpha$. If $\gamma$ never appears, the situation for nodes appearing after $\alpha$ is as in the $n$-split case. Like in the 2 split case, we can argue as in Lemma 4.3 that we obtain a contradiction from having too many equations resulting from too many nodes working on the $Q_h$ requirement with outcome $c$. Thus, $\gamma$ has to appear.

\begin{figure}
\begin{tikzpicture}[level distance=2cm, sibling distance=1cm, edge from parent path={(\tikzparentnode) -- (\tikzchildnode)}]
\Tree [.\node {$\alpha$ ($Q_h$ requirement)}; \edge[dotted] node[auto=right] {$\infty$};
    [.\node {$S_{V_1}$}; ]
     \edge node[auto=right] {$c$}; [.\node {$\delta$ (same $Q_h$ requirement)}; \edge[dotted] node[auto=right] {$\infty$}; [.\node {$S_{V_2}$};] \edge node[auto=right] {$c$}; [.\node {$\nu$ (same $Q_h$ requirement)}; \edge[dashed] node[auto=left] {Switching of the $Z_i^\alpha$ occurs as in $n$-case}; [.\node {$\gamma$ (same $Q_h$ requirement)}; \edge node[auto=right] {$\infty$}; [.\node {}; ]\edge[dotted] node[auto=right] {$c$ };[.\node {};]\edge[dotted] node[auto=right] {$h$};[.\node {};]]]]]
\end{tikzpicture}
\caption{Assuming that this figure signifies a portion of the true path, a dotted line signifies that the true path does not go this way and the dashed edges between nodes signifies the true path in between the two nodes. Here, we assume that all $Q$-nodes on the true path between $\nu$ and $\gamma$ that work on the same $Q_h$ requirement as $\alpha$ has true outcome $c$. Technically, there are other nodes working on $S$-requirements between $\alpha$ and $\delta$ and $\delta$ and $\nu$ but we have not included them in the picture.}
\end{figure}
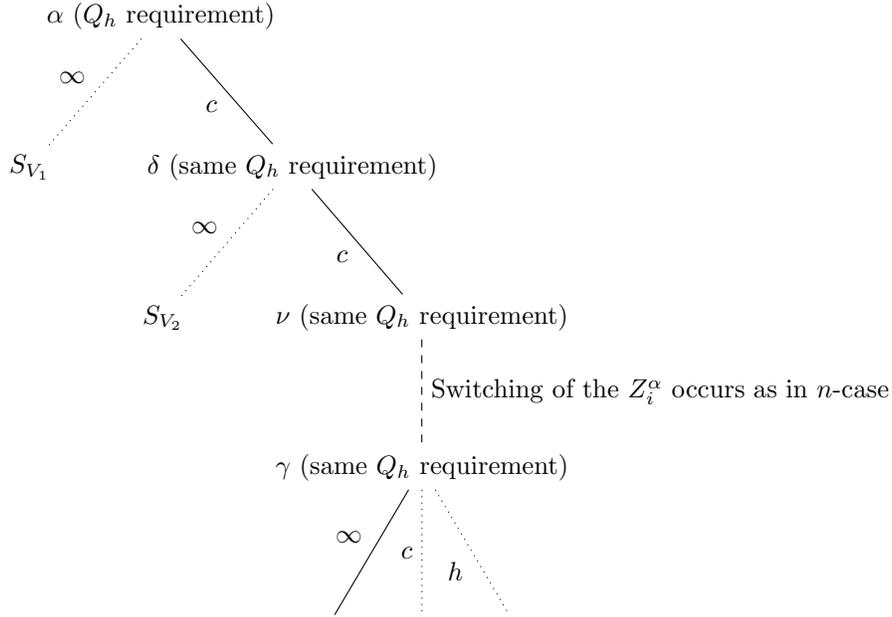
\subsection*{Outcomes}
Each node $\alpha$ working on a $D$-strategy has outcomes: $a$ (for acts) and $d$ (for diverge). On the $a$ outcome, $\Delta_i(x)$ converges and equals to 0 and $\alpha$ would like to put $x$ into $B$. Along the $d$ outcome, either $\Delta_i(x)$ diverges or is not equal to 0.

\medskip

Each node $\alpha$ working on a $Q$-strategy has outcomes: $\infty$, $c$ and $h$. Along the $\infty$ outcome, infinitely many $x$'s in $M_\alpha$ enter $B$ after stage $h(x,\Phi_{M_\alpha}(x))+1$. Along the $c$ outcome, cofinitely many $x$'s in $M_\alpha$ enter $B$ before $h(x,\Phi_{M_\alpha}(x))+1$ and either we have the means to conclude that $Z_i$ is nonspeedable or we switch $Z_i$. Along the $h$ outcome, we see that $h$ is partial.

\medskip

Each node $\alpha$ working on a $S_i$-strategy has outcomes: $split$ or $finite$. Along the $split$ outcome, we see that $X_i$ and $Y_i$ form a split of $B$ and define $f_i$ and $g_i$ for the child-nodes of $S_i$ to work on. Along the $finite$ outcome, we see that $X_i$ and $Y_i$ do not form a split of $B$. Under the $finite$ outcome, there are no child nodes of $S_i$.

Each node $\alpha$ working on a $S_{i,V}$-strategy has outcomes: $k$ and $s$. Along the $k$ outcome, either there is some $x$ in both $V$ and $Y$ or $\alpha$ can keep some element in $V$ out of $B$. Along the $s$ outcome, $\alpha$ cannot achieve $(\exists x)(x \in V \wedge x \notin X)$ and we try to achieve $x \in X \Rightarrow \Phi_X(x) \leq g(x, \Phi_V(x))$ for all $x$.

\subsection*{Priority Tree}
Recursively order $D$-, $Q$- and the $S$-requirements.

Fix a recursive ordering of the $D$-, $Q$- and $S$-requirements where $S_i$ is of higher priority of than $S_{i,V}$.

Let $\Lambda = \{\infty, c, h, split, finite, k,s,a,d\}$ with ordering $\infty < c < h<  split < finite <k <s <a <d$. The tree is a subset of $\Lambda^{<\omega}$.

As in the two split case, we define by recursion a function $G$ such that $G$ assigns to each $\alpha$ the list of $Z_i^\alpha$'s it is working on in the construction (\eg $G(\alpha) = (X_0, Y_1, X_2)$). As in the two split case, we also keep track of two lists $L_1(\alpha)$ and $L_2(\alpha)$ for each node. $L_1$ keeps track of the $S$-requirements that have appeared so far (but gets reset at every appearance of a $c$ outcome) and $L_2$ keeps track of $S$-requirements that need to be repeated.

We assign requirements to nodes as well as define $G, L_1, L_2$ by recursion at the same time.
For the empty node, assign the highest priority $Q_h$ requirement to $\alpha$. Let $G(\emptyset) = (X_0, X_1)$ and let $L_1(\emptyset) = L_2(\emptyset) = \emptyset$. Suppose that we have assigned a requirement to $\beta =\alpha \upharpoonright (|\alpha| -1)$, which we will call the $\beta$ requirement and defined $G(\beta), L_1(\beta)$ and $L_2(\beta)$. We now assign a requirement to $\alpha$ and define $G(\alpha), L_1(\alpha)$ and $L_2(\alpha)$.

\subsubsection*{Defining $G(\alpha)$}
Ask whether $\beta$ is a node working on some $Q_h$ with successor $c$. If so, let $k$ be the largest index such that $X_k$ appears in $G(\beta)$ and let $\overrightarrow{Z_0}$ and $\overrightarrow{Z_1}$ be such that $G(\beta) = \overrightarrow{Z_0} X_k \overrightarrow{Z_1}$. Note that we will prove that $k$ always exists if $\alpha$ is on the true path. If $k$ does not exist, define $G(\alpha)$ to be $G(\beta)$. If $k$ exists, define $G(\alpha)$ to be $G(\beta)$ with $X_k$ switched to $Y_k$ and $X_i$'s and $Y_i$'s in $\overrightarrow{Z_1}$ reset to be $X_i$ \ie $G(\alpha) = \overrightarrow{Z_0}Y_k\overrightarrow{X_1}$ where $\overrightarrow{X_1}$ is the $X$-side of the splits in $\overrightarrow{Z_1}$. If $\beta$ is not working on some $Q_h$ with successor $c$, ask whether $\beta$ is a node working on some $S_i$. If so, let $G(\alpha)$ be $G(\beta)$ with $X_i$ appended to the end of $G(\beta)$'s list. If not, define $G(\alpha)$ to be $G(\beta)$.

\subsubsection*{Defining $L_1(\alpha)$ and $L_2(\alpha)$}
Ask whether $\beta$ is a node working on some $Q_h$ with successor $c$. If so, let $L_2(\alpha) = L_1(\beta)$ and set $L_1(\beta) = \emptyset$. Otherwise, ask whether $\beta$ is a node working on some $S$-requirement. If so, let $L_1(\alpha)$ be $L_1(\beta)$ with $\beta$'s requirement affixed at the end. If $L_2(\alpha)$ is not empty and this $S$-requirement is in $L_2(\alpha)$, let $L_2(\alpha)$ be $L_2(\beta)$ with this $S$ requirement removed. Otherwise, let $L_1(\alpha) = L_1(\beta)$ and let $L_2(\alpha) = L_2(\beta)$.

\subsubsection*{Assigning a requirement to $\alpha$}
If the $\beta$-requirement is an $Q$-requirement with outcome $c$, assign the $\beta$-requirement to $\alpha$ and let its successors be $\infty$, $c$ and $h$.

Otherwise, check if $L_2(\alpha)$ is empty or not. If $L_2(\alpha)$ is not empty, assign the highest priority $S$ requirement in $L_2(\alpha)$ to $\alpha$. If this requirement is a $S_i$-requirement, let its successors be the $split$ and $finite$ outcomes and if this requirement is a $S_V$-requirement, let its successors be $k$ and $s$.

If $L_2(\alpha)$ is empty, we assign a requirement to $\alpha$ based on the type of requirement assigned to $\beta$-requirement: If the $\beta$-requirement is a $Q$-requirement, assign the highest priority $D$-requirement that has not been assigned so far and let its successors be $a$ and $d$. If the $\beta$-requirement is a $D$-requirement, assign the highest priority $S$-requirement that has not been assigned so far and such that $\beta$ does not extend the $finite$ outcome for a node assigned to the parent requirement of this $S$-requirement (if this $S$-requirement is not the parent requirement itself) and let its successors be $k$ and $s$ if it is a $S_V$ requirement and let its successors be $split$ and $finite$ if it is a $S_i$ requirement. Otherwise, assign the highest priority $Q$-requirement that has not been assigned so far and let its successors be $\infty$, $c$ and $h$.
\subsection*{Construction}
We call a node working on a $S$-strategy \emph{active} at stage $s$ if the node has acted at some stage $t <s$ and has not been cancelled (or reset).

At stage $s$, define $\delta_s$ (an approximation to the true path) by recursion as follows. Suppose that $\delta_s \upharpoonright e$ has been defined for $e <s$. Let $\alpha$ be the last node of $\delta_s \upharpoonright e$. We now define $\delta_s \upharpoonright (e+1) \supseteq \delta_s \upharpoonright e$.

\begin{enumerate}
\item If $\alpha$ is a $Q$-node, look to see if it is waiting for a number $x$ to go into $B_\beta$, \ie there is some $x$ in $M_\alpha$ that has not entered $B_\beta$. If so, look to see if an active node $\eta$ to to the left or below $\alpha^\smallfrown \langle \infty \rangle$ would like to put $x$ in (or keep $x$ out). Let $\eta$ act. If there are no remaining numbers that $\alpha$ is waiting on, let $\delta_s \upharpoonright (e+1) = (\delta_s \upharpoonright e)^\smallfrown \langle c \rangle$. Otherwise, look to see if there is some $z$ in the remaining numbers such that $h(z,\Phi_{M_{\alpha}}(z))$ has converged and our current stage is greater than $h(z,\Phi_{M_{\alpha}}(z))+1$. If so, let $\delta_s \upharpoonright (e+1) = (\delta_s \upharpoonright e)^\smallfrown \langle \infty \rangle$. If not and some $x$ was pulled, let $\delta_s \upharpoonright (e+1) = (\delta_s \upharpoonright e)^\smallfrown \langle c \rangle$. Otherwise, let $\delta_s \upharpoonright (e+1) = (\delta_s \upharpoonright e)^\smallfrown \langle h \rangle$.
\item If $\alpha$ is a $S_V$-node, look to see if $x_\alpha$ has been defined. If so, let $\delta_s \upharpoonright (e+1) = (\delta_s \upharpoonright e)^\smallfrown \langle k \rangle$. Otherwise, go through steps (1) - (4) in the strategy for $S_V$. If one of (1) - (4) holds and we can define $x_\alpha$, let $\delta_s \upharpoonright (e+1) = (\delta_s \upharpoonright e)^\smallfrown \langle k \rangle$. Otherwise, let $\delta_s \upharpoonright (e+1) = (\delta_s \upharpoonright e)^\smallfrown \langle s \rangle$.
\item If $\alpha$ is a $S_i$-node, look to see if $X_i$ and $Y_i$ seem to form a split of $B$. If so, let $\delta_s \upharpoonright (e+1) = (\delta_s \upharpoonright e)^\smallfrown \langle split \rangle$. Otherwise, let $\delta_s \upharpoonright (e+1) = (\delta_s \upharpoonright e)^\smallfrown \langle finite \rangle$.
\item If $\alpha$ is a $D$-node, look to see whether $\Delta_i(x_\alpha)$ has converged and equals 0. If so, let $\delta_s \upharpoonright (e+1) = (\delta_s \upharpoonright e)^\smallfrown \langle a \rangle$. Otherwise, let $\delta_s \upharpoonright (e+1) = (\delta_s \upharpoonright e)^\smallfrown \langle d \rangle$.
\end{enumerate}

Reset nodes to the right of $\delta_s$ and let active $S$-nodes to the left of $\delta_s$ or below the $\infty$ outcome for one of the nodes in $\delta_s$ act. At substage $t \leq s$, let $\delta_s \upharpoonright t$ act according to its description in the strategies above, \ie $\delta_s \upharpoonright t$ enumerates numbers in its sets and extends their definitions.
\subsection*{Verification}
Recall that the true path denotes the leftmost path travelled through infinitely often. In the following lemmas, we may drop ``in $R_\alpha$" when referring to an equation. In the proofs below, we only deal with a sequence of equations in $R_\alpha$ where $\alpha$'s are on the same path. As $R_\alpha \subseteq R_\beta$ if $\beta \subseteq \alpha$, these equations in different $R_\alpha$'s are true in their intersection (see Lemma 5.4). We also drop the superscript $\alpha$ when referring to $Z_i^\alpha$ as the version of $Z_i$ we refer to will be clear from context. The first few lemmas lead to proving that there is a node working on every requirement on the true path.

\begin{lemma}
For every node $\alpha$, if $x$ is an unused witness in $P_\beta$ and is not reset as a witness, $x$ does not go into $B$ while $\alpha$ is restraining $x$. For every node $\alpha$, if $x_\alpha = x_\gamma$ for another node $\gamma$, we can successfully keep $x_\alpha$ out of $B$. If $\alpha$ is on the true path, $M_\alpha \subseteq^\ast B$. If $\alpha$ is on the true path, we only reset its witness finitely many times.
\end{lemma}
\begin{proof}
The proof follows the same reasoning as in the one split case.
\end{proof}

The next few lemmas use the following definition.
\begin{definition}
If $\alpha$ working on $Q_h$ has outcome $c$, we are unable to achieve $(\dagger)$ in the $Q_h$ requirement for infinitely many $x$'s. Thus $S_{i,V_j}$ strategies must put cofinitely many of these $x$'s into $B$ before stage $h(x,\Phi_{M_\alpha}(x))+1$ due to $(\sharp)$. Let $S_{i_0, V_{j_0}},\cdots,S_{i_k, V_{j_k}}$ be a listing of the S-strategies involved (\ie the active $S$-nodes below the $\infty$ outcome of $\alpha$). We only commit to $(\sharp)$ when there is no $x$ in both $V$ and the other half of $B$, in which case $V_{j_0} \cup \cdots \cup V_{j_k} = Z_{i_0} \cup \cdots \cup Z_{i_k}$. Therefore, modulo finitely many elements we have the following equation in $R_\alpha$: $B = M = V_{j_0} \cup \cdots \cup V_{j_k} = Z_{i_0} \cup \cdots \cup Z_{i_k}$. We refer to an equation in $R_\alpha$: $B = X_{i_0} \cup \cdots \cup X_{i_j} \cup Y_{k_0} \cup \cdots \cup Y_{k_l}$ as an \emph{equation} obtained from the $c$ outcome if we obtained this equality directly in the way described above. We refer to an equation in $R_\alpha$: $B = X_{i_0} \cup \cdots \cup X_{i_j} \cup Y_{k_0} \cup \cdots \cup Y_{k_l}$ as an \emph{reduced equation} if equations obtained from the $c$ outcome imply it.
\end{definition}

\begin{lemma} Let $\alpha$ be a node working on the $Q_h$ requirement. If $\alpha$'s outcome is $c$, the equation obtained from the $c$ outcome can only involve $S_{i,V}$ requirements where $S_i$ is of higher priority than $Q_h$.
\end{lemma}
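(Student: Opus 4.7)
The plan is to argue by contradiction. Suppose the equation in $R_\alpha$ obtained from $\alpha$'s $c$ outcome involves some $S_{i_l, V_{j_l}}$ with $S_{i_l}$ of \emph{lower} priority than $Q_h$. By Definition 5.2 there are then infinitely many $x \in M_\alpha \cap R_\alpha$ with $x \in V_{j_l}$ for which the entry of $x$ into $B$ before the deadline $h(x,\Phi_{M_\alpha}(x))+1$ is caused by the $(\sharp)$ commitment of some active node $\eta$ below $\alpha^\smallfrown \langle \infty \rangle$ that works on $S_{i_l, V_{j_l}}$. Under $(\sharp)$, such an $\eta$ places $x$ into $B$ at stage $f_{i_l}(x, \Phi_{V_{j_l}}(x))$.

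The main leverage is the definition of $f_{i_l}$ given at the start of the Strategies subsection. Because $\alpha$'s outcome is $c$ (not $h$) and $Q_h$ has strictly higher priority than $S_{i_l}$, the function $h$ is among those that $f_{i_l}$ is forced to majorize; concretely, $f_{i_l}(x,s) > h(x,t)$ for every $t \leq s$. So, provided $\Phi_{V_{j_l}}(x) \geq \Phi_{M_\alpha}(x)$, taking $s = \Phi_{V_{j_l}}(x)$ and $t = \Phi_{M_\alpha}(x)$ yields $f_{i_l}(x, \Phi_{V_{j_l}}(x)) > h(x, \Phi_{M_\alpha}(x))$. Hence $x$ enters $B$ no earlier than stage $h(x, \Phi_{M_\alpha}(x)) + 1$, contradicting the choice of $x$.

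What remains, and where I expect the main obstacle, is to establish the timing condition $\Phi_{V_{j_l}}(x) \geq \Phi_{M_\alpha}(x)$ for the infinitely many $x$ in question. The intended argument is the principle isolated in the ``Dynamics of the construction'' paragraph: if $\eta$ sees $x \in V_{j_l}$ \emph{before} $x$ has reached $\alpha$ in its upward climb, then $\eta$ catches $x$ at its own level -- either by selecting $x$ through one of steps (2)--(4) of its $S_V$-strategy and restraining it, or by routing $x$ into $B$ via $(\sharp)$ through $B_\beta$ without $x$ ever being routed through $B_\alpha^c$ -- so that $x$ never becomes an element of $M_\alpha$. Making this precise requires a careful tracking of the pool flow together with the used/unused convention, in order to rule out that a witness $x$ currently held by $\alpha$ in $R_\alpha$ could be flushed into $V_{j_l}$ and pulled by $\eta$ before $\alpha$ promotes $x$ into $M_\alpha$. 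Once the timing condition is secured, the previous two paragraphs close the contradiction, and as $l$ was arbitrary, every $S_{i_l}$ appearing in the equation obtained from the $c$ outcome must be of higher priority than $Q_h$.
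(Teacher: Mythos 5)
Your argument captures the first half of the paper's proof well: you correctly isolate the leverage from the definition of $f_i$ (the paper states it in terms of $g_i$, but since $g_i > f_i$ and the $(\sharp)$ delay in the general case is $f_i$, your version is equivalent), and you correctly observe that because $\alpha$'s outcome is $c$ rather than $h$, the particular $h$ belonging to $\alpha$'s requirement is among the functions $f_{i_l}$ is forced to dominate. This is exactly the paper's argument for the case in which the parent node for $S_{i_l}$ lies below or to the right of $\alpha$, and the paper likewise conditionalizes on the timing inequality $\Phi_{M_\alpha}(x) \leq \Phi_V(x)$ in that case.

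The gap is that your proposal only treats pulls by active $\eta$ \emph{below} $\alpha^\smallfrown\langle\infty\rangle$, whereas the construction also lets active $S$-nodes to the \emph{left} of $\alpha$ pull numbers that $\alpha$ is waiting on. For an $S_{i,V}$-node $\gamma$ to the left of $\alpha$ with its parent also to the left, the paper does not rerun the $f_i$-majorization; instead it argues structurally that such a $\gamma$ would never have committed to $(\sharp)$ in the first place. The point is that any $x \in M_\alpha$ relevant to the equation has not yet entered $B_\beta$ (hence not $B$) at stage $\Phi_V(x)$, since $\alpha$ routes $x$ into $B_\beta$ only after the deadline; thus $x$ is an available element of $V$ that $\gamma$ could select as a witness and restrain permanently via steps (1)--(4) of the $S_V$-strategy, which takes precedence over $(\sharp)$. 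So $\gamma$ takes the $k$ outcome and $S_{i,V}$ cannot contribute to the equation. Your $f_i$-based timing argument does not obviously extend to this case, precisely because for a left node you cannot assume the timing inequality $\Phi_V(x)\geq\Phi_{M_\alpha}(x)$ that your contradiction requires; you flag that inequality as the ``main obstacle'' but do not resolve it, and the paper's resolution for the left case is a genuinely separate argument that you would need to supply.
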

\begin{proof} If $S_i$ lies below or to the right of $\alpha$, the lemma follows by our construction of $g_i(x,s)$. We only need $x \in M_\alpha$ to stay out of $B$ before $h(x,\Phi_{M_\alpha}(x)) + 1$ which is strictly less than $g_i(x,\Phi_V(x))$ for $x$'s such that $\Phi_{M_\alpha}(x) \leq \Phi_V(x)$. This is why we need our definition of $g_i$.

Now let $\gamma$ be a node on to the left of $\alpha$ working on a $S_{i,V}$ requirement with its parent node to the left of $\alpha$. We now prove that $S_{i,V}$ cannot be involved in the equation we obtain from $\alpha$'s $c$ outcome. To occur in the equation, $S_{i,V}$ must commit to $(\sharp)$ and some $x \in M_\alpha$ is put in $B$ before $h(x,\Phi_{M_\alpha}(x))+1$ by $S_{i,V}$. $x$ only enters $B_\beta$ (for $\beta \subset \alpha$) after stage $h(x,\Phi_{M_\alpha})+1$ so $x$ has not entered $B_\beta$ at stage $\Phi_V(x)$. By our assumption that $S_{i,V}$ is to the left of $\alpha$ so $S_{i,V}$ would not commit to $(\sharp)$ as $x$ is a potential witness it can keep out.
\end{proof}

In particular, Lemma 5.3 shows that for every node $\alpha$ working on a $Q_h$ outcome, the equations obtained from the $c$ outcome only involves a fixed number of $Z_i$'s.

The next lemma shows that if we have several equations obtained from the $c$ outcome (of $\gamma_i$'s) along some path and $\gamma_i \subseteq \alpha$, then the reduced equation from these equations is true in $R_\alpha$.

\begin{lemma} Let $\alpha$ be on the true path. If we obtain a reduced equation from equations obtained by the $c$ outcome for nodes $\gamma \subseteq \alpha$ then the reduced equation is true of the intersections mentioned in the equation in $R_\alpha$.
\end{lemma}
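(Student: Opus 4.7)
The plan is to reduce the claim to two ingredients: first, that for $\gamma \subseteq \alpha$ with both on the true path, the pool $R_\alpha$ is contained in $R_\gamma$; second, that the set-theoretic manipulations used to produce a reduced equation commute with intersecting by a smaller pool. Granted these, the lemma is essentially automatic.

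For the containment, I would argue by induction on the distance from $\gamma$ to $\alpha$. By the description of the $Q$-strategy, a $Q$-node $\gamma$ with outcome $c$ outputs the pool $R_\gamma$ to its successor. Every subsequent node $\nu$ on the path either passes its input pool through unchanged, removes a single witness from it, or (if $\nu$ is itself a $Q$-node) builds a new recursive set $R_\nu$ by drawing witnesses from its input pool at step (1) of the $Q$-strategy. In every case the output pool is contained in the pool it received, and so by induction $R_\alpha \subseteq R_\gamma$. In particular, for each ancestor $\gamma$ of $\alpha$ with outcome $c$ contributing a base equation, we have $R_\alpha \subseteq R_\gamma$.

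Next I would restrict each base equation to $R_\alpha$. By Definition 5.2, the equation obtained from the $c$ outcome of $\gamma$ asserts, modulo finitely many elements,
\[ B \cap R_\gamma \;=\; (V_{j_0} \cup \cdots \cup V_{j_k}) \cap R_\gamma \;=\; (Z_{i_0} \cup \cdots \cup Z_{i_k}) \cap R_\gamma.\]
Since $R_\alpha \subseteq R_\gamma$, the identity $(A \cap R_\gamma) \cap R_\alpha = A \cap R_\alpha$ holds for every $A$, so intersecting every term with $R_\alpha$ yields the analogous equation with $R_\gamma$ replaced by $R_\alpha$. Thus all base equations feeding into the reduction become simultaneously valid in $R_\alpha$.

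Finally, I would observe that the derivation of a reduced equation from the base equations uses only elementary set-theoretic operations (unions, intersections, and substitutions based on ambient facts such as $X_i \cap Y_i = \emptyset$ and $X_i \cup Y_i = B$, as in the two-split argument of Lemma 4.3), and each of these operations commutes with intersection by a fixed set. Applying the same derivation to the restricted equations therefore produces the reduced equation with every term intersected by $R_\alpha$, which is precisely the assertion that the reduced equation is true in $R_\alpha$. The main (mild) subtlety is bookkeeping the ``modulo finitely many elements'' caveat through the reduction, but since only finitely many base equations are combined and each introduces only a finite symmetric difference, the reduced equation still holds modulo finite.
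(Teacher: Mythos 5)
Your proposal is correct and takes essentially the same approach as the paper: use the containment of pools along the path to restrict each base equation down to $R_\alpha$, and note that the deduction producing the reduced equation commutes with that restriction. Worth flagging: the paper's proof of this lemma writes the inclusion as ``$R_\gamma \subseteq R_\alpha$'', which is a typo for $R_\alpha \subseteq R_\gamma$ (consistent with the remark just before the verification section and with your induction on the pool-shrinking property of the strategies); your write-up has the direction right, and your extra care about the ``modulo finite'' bookkeeping is a harmless but tidy addition.
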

\begin{proof} Every equation obtained by a $c$ outcome for $\gamma$ is an equation in $R_\gamma$. As $R_\gamma \subseteq R_\alpha$, equations in $R_\gamma$ are also equations in $R_\alpha$. If we have true equations in $R_\alpha$ and we deduce an equation from it, the deduced equation is true in $R_\alpha$. Thus, the reduced equation is true of the intersections mentioned in the reduced equation in $R_\alpha$.
\end{proof}

The following lemma shows that if $\alpha$ is on the true path, in defining $G(\alpha)$, $k$ always exists.

\begin{lemma} Let $\alpha$ be a node working on a $Q_h$ requirement. Let $S_0,...,S_i$ be a listing of all higher priority $S$-requirements. We cannot get an equation from the $c$ outcome of $\alpha$ on the true path of the form $Y_{i_0} \cup \cdots \cup Y_{i_k} = B$ where $\{i_0,...,i_k\} \subseteq \{0,...,i\}$.
\end{lemma}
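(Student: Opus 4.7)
The plan is to argue by contradiction, generalizing the four-equation computation of Lemma 4.3. Suppose that $\alpha$ on the true path works on $Q_h$, has outcome $c$, and yields (per Definition 5.2) the equation $E_\alpha\colon Y_{i_0}\cup\cdots\cup Y_{i_k}=B$ in $R_\alpha$; by Lemma 5.3, $J:=\{i_0,\ldots,i_k\}\subseteq\{0,\ldots,i\}$. Let $\beta_0\subset\cdots\subset\beta_{m-1}\subset\alpha$ be the $Q$-nodes on the true path whose $c$-successors lie on the true path; each $\beta_j$ yields an equation $E_j$ in $R_{\beta_j}$, and by Lemma 5.4 each $E_j$ remains valid in $R_\alpha$.

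The first key step is to recognize the switching scheme as binary incrementation on the bit-vector indexed by $\{0,\ldots,i\}$ (with $X=0$, $Y=1$, and higher index equal to less significant bit): at a $c$-outcome we locate the highest-indexed $X$ (equivalently, the rightmost $0$, which by construction has only $1$'s to its right) and flip it to $Y$, then reset all less significant bits to $X$, exactly matching a binary carry. Thus $G(\alpha)$ restricted to $\{0,\ldots,i\}$ is the binary representation of the number of prior $c$-outcomes on the true path, and for $Z_{i_l}^\alpha=Y_{i_l}$ to hold for all $l$ the current counter state must have a $1$ at every position in $J$. Tracing backwards, for each $l$ one can identify a unique $c$-outcome node $\gamma_l\subseteq\alpha$ on the true path whose switch flipped $Z_{i_l}$ from $X$ to $Y$ and after which no later switch reset $Z_{i_l}$; indeed, once $Z_{i_l}$ has been reset (which can only happen from a switch at a strictly lower index), returning it to $Y$ requires another switch at exactly position $i_l$.

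Having located the $\gamma_l$'s, I would combine their equations $E_{\gamma_l}$ with $E_\alpha$ by iterated application of the set identity $(U\cup X_j)\cap(U\cup Y_j)=U\cup(X_j\cap Y_j)=U$, the last equality holding because $X_j\cap Y_j=\emptyset$. Each application eliminates one index $i_l$ from the right-hand side, so after $|J|$ reductions one is left with the equation $\emptyset=B\cap R_\alpha$ in $R_\alpha$. This contradicts the existence of a witness $x\in B\cap R_\alpha$, which is guaranteed because by the analogue of Lemma 3.6 in the present setting every $D_i$-strategy on the true path is eventually satisfied, and by construction such a $D_i$-witness is chosen from $P_\alpha\subseteq R_\alpha$ and ultimately enters $B$.

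The main obstacle is that each $E_{\gamma_l}$ may involve $Z$-indices not in $J$---indices $>n:=\max J$ that were reset between $\gamma_l$ and $\alpha$, or indices of other $S_{i,V}$-strategies that happened to commit to $(\sharp)$ below $\gamma_l$ but not below $\alpha$. Handling these cleanly requires (i) observing that any index above $n$ appearing in some $E_{\gamma_l}$ is later reset to its $X$-side by a subsequent switch before $\alpha$ and can be absorbed by pairing with an additional $c$-outcome equation on the path, and (ii) invoking Lemma 5.3 again to rule out indices of priority lower than $Q_h$. Verifying that the iterated intersections do not spuriously re-introduce eliminated indices, and that the reduction descends cleanly to $|J|=0$, is the combinatorial crux, and is the direct generalization of the pairing $X_0\cup X_1=B$ with $X_0\cup Y_1=B$ (and similarly for $Y_0$) performed in Lemma 4.3.
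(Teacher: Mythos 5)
The high-level ingredients of your proposal are the right ones and are the same as the paper's: you use the disjointness of $X_j$ and $Y_j$ to eliminate indices by intersecting paired equations, you observe that $R_\gamma\supseteq R_\alpha$ for $\gamma\subseteq\alpha$ so the earlier equations persist in $R_\alpha$, and you close the argument with a $D$-witness that must land in $B\cap R_\alpha$. The binary-counter picture of the switching scheme is also correct and is a genuinely helpful way to see what $G$ is doing.

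However, the argument as written has a real gap, and you flag it yourself: the step where you trace back, for each $l$, a single node $\gamma_l$ whose switch fixed $Z_{i_l}$ at $Y_{i_l}$, and then intersect $E_{\gamma_l}$ with $E_\alpha$, does not cleanly eliminate $i_l$. The equation $E_{\gamma_l}$ obtained at $\gamma_l$'s $c$-outcome has $Z_{i_l}^{\gamma_l}=X_{i_l}$, but it also carries whatever \emph{other} indices were active below $\gamma_l$, with whatever values $G(\gamma_l)$ assigned them at that time; those extra indices (in particular the ones with index $>\max J$ that have since been reset) generally do \emph{not} match the indices in $E_\alpha$, so the two equations are not of the form $U\cup X_{i_l}$ versus $U\cup Y_{i_l}$ and the intersection identity you invoke does not apply directly. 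You name this as ``the combinatorial crux'' and propose absorbing the stray indices by further pairing, but that sub-plan is exactly the part that needs to be proved, and organizing it around individually traced $\gamma_l$'s makes the bookkeeping as hard as the original problem.

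The paper avoids this by inducting on $i$, the largest index of a higher-priority $S$-requirement. In the inductive step it uses the switching scheme to assert that along the path one obtains equations for \emph{every} combination of values of $Z_0,\ldots,Z_{i+1}$ (this is the clean consequence of the binary-counter behaviour); for each fixed tuple $(Z_0,\ldots,Z_i)$ it then pairs the equation with $X_{i+1}$ against the one with $Y_{i+1}$ to get the reduced equation $Z_0\cup\cdots\cup Z_i=B$ in $R_\alpha$, and the inductive hypothesis (base case $i=0$, where $X_0=B$ and $Y_0=B$ alone give the contradiction) finishes it. That induction is precisely the clean descending reduction you are hoping for; by quantifying over all combinations at once rather than tracing particular $\gamma_l$'s, it never has to reconcile mismatched index sets. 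So your proposal is on the right track but would need to be restructured into this induction (or something doing the same bookkeeping) to actually close.
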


\begin{proof}
We prove the lemma by induction on $i$.

For $i=0$, suppose by contradiction that we could obtain such an equation, \ie we obtain the equation in $R_\alpha$, $Y_0 = B$. By our scheme of switching between $X$ and $Y$, we must have also obtained the equation in $R_\gamma$: $X_0 = B$ for some $\gamma \subseteq \alpha$. At some stage, the satisfaction of some $D_i$ requirement puts some element $x$ into $B \cap R_\alpha \cap R_\gamma$ as requirements being satisfied by nodes on the true path after the two equations both appear only take witnesses from $R_\alpha \cap R_\gamma$. However, this would mean that $x$ enters both $X_0$ (from the equation in $R_\gamma$: $X_0 = B$ for some $\gamma \subseteq \alpha$) and $Y_0$ (from the equation in $R_\alpha$: $Y_0 = B$ for some $\gamma \subseteq \alpha$). As $X_0$ and $Y_0$ are disjoint, we have obtained our contradiction.

For $i+1$, suppose by contradiction that we could obtain such an equation. By our scheme of switching between $X$ and $Y$, we obtain all equations of length $i+2$ involving all combinations of $Z_0,...,Z_{i+1}$ (from the $c$ outcome), \ie we have $Z_0 \cup ... \cup Z_i \cup X_{i+1} = B$ and $Z_0 \cup...\cup Z_i \cup Y_{i+1} = B$ for every combination of values for $Z_0,...,Z_i$. For a fixed combination of values, the two equations $Z_0 \cup ... \cup Z_i \cup X_{i+1} = B$ and $Z_0 \cup...\cup Z_i \cup Y_{i+1} = B$ imply the reduced equation in $R_\alpha$: $Z_0 \cup ... \cup Z_i = B$ as $X_{i+1}$ and $Y_{i+1}$ are disjoint. As we are considering all combinations of $Z_0,...,Z_i$, we have our contradiction by inductive hypothesis.
\end{proof}

From Lemma 5.5, we see that we are always able to switch one of the $Z_i$'s.

\begin{lemma} For every $i$, we only switch $Z_i$ finitely many times.
\end{lemma}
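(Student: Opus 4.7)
The plan is to induct on $i$, using the switching rule of the construction directly. Recall that at any $c$-outcome the construction finds the largest index $k$ with $Z_k = X_k$, switches $Z_k$ to $Y_k$, and resets every $Z_j$ with $j > k$ back to $X_j$; since switching occurs only at $c$-outcomes, this is the sole mechanism by which any $Z_i$ changes value.

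For the base case $i = 0$, I would observe that the reset portion of the rule only touches indices strictly greater than the switched index $k \geq 0$, so $Z_0$ is never reset. Hence $Z_0$ can change only by being the directly switched index, in which case it flips from $X_0$ to $Y_0$ once and for all. Thus $Z_0$ switches at most once on the true path. For the inductive step, assume $Z_0, \ldots, Z_{i-1}$ each switch only finitely often. I would then classify every switch of $Z_i$ as either type $(a)$, a direct $X_i \to Y_i$ flip (when $Z_i$ is the chosen index $k$), or type $(b)$, a $Y_i \to X_i$ reset (triggered when some $Z_k$ with $k < i$ is directly switched while $Z_i$ is currently $Y_i$). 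Because $Z_i$ enters $G$ as $X_i$ and alternates thereafter, the switches of $Z_i$ alternate starting with type $(a)$, so the total number of switches is at most $2 B_i + 1$, where $B_i$ is the count of type-$(b)$ switches. Each type-$(b)$ switch of $Z_i$ is subordinate to a type-$(a)$ switch of some $Z_k$ with $k < i$, so $B_i \leq \sum_{k < i} T_k$, where $T_k$ bounds the switches of $Z_k$. By induction this sum is finite, and therefore $Z_i$ switches only finitely often.

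The point that requires care is the bookkeeping for when $Z_i$ first enters $G(\alpha)$: at the node where the $S_i$-requirement is first assigned, $Z_i$ is initialized as $X_i$, and this introduction must not be counted as a switch and must not break the alternation accounting. Both facts are immediate from the definition of $G(\alpha)$ in the priority tree, so no additional work is needed. I note that this counting argument does not directly invoke Lemma 5.5: the finiteness of switches is a property of the switching rule alone (and in fact yields a crude bound $T_i \leq 2\sum_{k<i} T_k + 1$). Lemma 5.5 is nevertheless the reason the rule is always \emph{applicable} on the true path, \ie that some $Z_i$ still equals $X_i$ at each $c$-outcome we encounter.
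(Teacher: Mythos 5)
Your proof is correct and uses essentially the same idea as the paper: induct on $i$, noting that a $Y_i \to X_i$ reset of $Z_i$ can only be triggered by a direct switch of some $Z_k$ with $k < i$, so once the lower-indexed $Z_k$'s stabilize, $Z_i$ can switch at most once more. The paper phrases this via a stabilization stage $s$ after which no $Z_k$ with $k<i$ switches, while you do a direct counting argument yielding the explicit bound $T_i \leq 2\sum_{k<i} T_k + 1$; this is a minor stylistic difference, not a different route.
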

\begin{proof} By induction on $i$. Let $s$ be the least stage such that all of the higher priority $Z_i$'s do not switch again. We prove that once $Z_i$ switches, it cannot switch back again. Whenever a higher priority $Z_j$ switches, we switch $Z_i$ back to $X_i$ so at stage $s$, $Z_i$ is defined to be $X_i$. If $Z_i$ never switches to $Y_i$, we are done. Otherwise, $Z_i$ switches from $X_i$ to $Y_i$. The only reason why $Z_i$ would switch back to $X_i$ would be because some smaller indexed set switched and thus it cannot switch back after $s$.
\end{proof}

\begin{lemma}
On the true path, for every requirement, there is a node that works on it.
\end{lemma}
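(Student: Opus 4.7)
The plan is to mimic the proof of Lemma 4.5 and proceed by induction on the position of a requirement $R$ in the recursive ordering of all requirements. Under the inductive hypothesis that every higher-rank requirement already has a node on the true path working on it, let $\alpha_0$ be the deepest such node on the true path; the goal is to show that the priority tree assigns $R$ to some node on the true path strictly below $\alpha_0$.

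First, I would use Lemma 5.5 and Lemma 5.6 together to show that only finitely many $c$-outcomes occur on the true path at any single $Q_h$-node. Lemma 5.5 guarantees that at each $c$-outcome on the true path, a switch of some $Z_i$ can actually be performed, and Lemma 5.6 bounds the total number of switches of each $Z_i$. Since each $c$-outcome causes at least one switch among the $Z_i$'s corresponding to $S_i$-requirements assigned to ancestor nodes, and since new $S_i$'s can only be introduced after an intervening $\infty$ or $h$ outcome resolves the current $Q_h$-node, the sequence of $c$-outcomes at a single $Q_h$-node on the true path must be finite.

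Second, I would analyze the list $L_2$. Since $L_2(\alpha)$ is refreshed from $L_1(\beta)$ only when $\beta$ is a $Q$-node with successor $c$, and strictly decreases each time we visit a node assigned an $S$-requirement from $L_2$, the finiteness established above guarantees that $L_2$ eventually becomes empty on the true path below $\alpha_0$. Once $L_2$ is empty and no further $c$-outcomes intervene, the assignment rule prescribes the next unassigned requirement in the standard $Q$-$D$-$S$ cycle by priority within each type, which by the inductive hypothesis must eventually be $R$.

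The main obstacle is the first step: carefully bounding the number of $c$-outcomes in the presence of later-appearing $S_i$-requirements that enlarge the switching scheme. The resolution is that the priority tree only introduces a new $S_i$ after a $Q$-node's non-$c$ outcome has settled, so the frontier of active $Z_i$'s is stable throughout any uninterrupted run of $c$-outcomes at a fixed $Q_h$-node, making Lemma 5.6 directly applicable there.
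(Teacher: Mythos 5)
Your proposal matches the paper's argument: the paper also reduces Lemma~5.7 to showing that each $Q_h$-node on the true path is repeated only finitely often, invoking Lemma~5.5 to guarantee a switch occurs at each $c$-outcome and Lemma~5.6 to bound the switches of each $Z_i$, and (implicitly, by analogy with Lemma~4.5) the emptying of $L_2$. The one point where you substitute an informal tree-structure remark for a cited lemma is the claim that the frontier of relevant $Z_i$'s is fixed during a run of $c$-outcomes; the paper handles this by invoking Lemma~5.3, which you should cite explicitly to rule out contributions to the equation from $S_{i,V}$-nodes whose parent $S_i$ lies to the left of or below $\alpha$, not merely from new $S_i$'s introduced along the path.
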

\begin{proof} It suffices to show that each $Q_h$ strategy is not repeated infinitely often. Every $Q_h$ strategy is only repeated when it has a $c$ outcome. By Lemma 5.3, we obtain an equation only involving $S_{i,V}$ strategies that come from higher priority $S_i$'s. Thus, whenever we have a $c$ outcome, we have obtain an equation involving a fixed number of $Z$'s: $Z_0,\cdots,Z_k$. By Lemma 5.5, we are always able to switch some $Z$ from an $X$ to a $Y$. By the previous lemma, we can only switch every $Z_i$ finitely many times. Therefore, we cannot have a $c$ outcome occur infinitely often and must go to the $\infty$ outcome.
\end{proof}

\begin{lemma}
For all $i$, the $D_i$ requirement is satisfied.
\end{lemma}
\begin{proof}
This follows by Lemma 5.1 and similar reasoning as in the one split case.
\end{proof}

\begin{lemma} For all $i$, $S_i$ is satisfied.
\end{lemma}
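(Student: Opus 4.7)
The plan is to mirror the proof of Lemma 4.7 using the general-case machinery. Fix $i$. By Lemma 5.6, $Z_i$ switches only finitely many times on the true path, so there is a stage $s_0$ after which $Z_i$ (and every $Z_j$ with $j \le i$) has a fixed value, say $Z_i^*$ (either $X_i$ or $Y_i$). By Lemma 5.7 there is a node $\alpha_i$ on the true path working on the parent requirement $S_i$ with the $split$ outcome (otherwise no split exists and $S_i$ is vacuously satisfied), and at $\alpha_i$ the functions $f_i$ and $g_i$ are defined once and for all as recursive functions by the formulas given in the construction.

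Next I show that every subrequirement $S_{i,V}$ is satisfied with respect to this $g_i$ and $Z_i^*$. By Lemma 5.7 applied to each $S_{i,V}$, there is a node on the true path working on $S_{i,V}$. It is essential here that this node sits below $\alpha_i$ and past stage $s_0$, so that the effective side of the split used by its strategy is $Z_i^\alpha = Z_i^*$. This is guaranteed by the $L_2$-repetition mechanism: each $c$ outcome triggers a switching, at which point $L_2$ is loaded with the $S$-requirements that had been visited in the preceding segment (namely those on $L_1$), forcing them to be reassigned along the path with the updated value of $Z_i$. Since by Lemma 5.6 only finitely many switchings occur, eventually every $S_{i,V}$ reappears with $Z_i^\alpha = Z_i^*$ and is no longer perturbed.

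Now fix such a node $\alpha$ on the true path working on $S_{i,V}$ with $Z_i^\alpha = Z_i^*$. By inspection of the strategy for $S_{i,V}$, exactly one of two situations occurs. Either one of steps (1)--(4) eventually succeeds in defining a witness $x_\alpha \in V$; by Lemma 5.1 this witness is permanently kept out of $B$, and in particular out of $Z_i^*$, so the first disjunct $(\exists x \in V)\, x \notin Z_i^*$ holds. Or no step (1)--(4) ever applies and $\alpha$ commits to $(\sharp)$: whenever $x$ enters $V$ at stage $s$, it is placed into $B$ at stage $f_i(x,s)$. By the definition of $g_i$, once $x \in B_{f_i(x,s)}$ the element $x$ must then enter either $X_i$ or $Y_i$ by stage $g_i(x,s) = g_i(x, \Phi_V(x))$; in particular, if $x \in Z_i^*$ then $\Phi_{Z_i^*}(x) \le g_i(x, \Phi_V(x))$, so the second disjunct holds. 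In either case $S_{i,V}$ is met, and since this holds for every $V$ with the same recursive $g_i$, the parent requirement $S_i$ is satisfied.

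The main obstacle is the bookkeeping for the third paragraph: one must verify that the $L_2$-mechanism indeed re-enqueues every previously visited $S_{i,V}$ at each $c$ outcome, and that the ordering imposed by $f_i$ prevents higher priority $Q$-delays or $S_j$-delays (for $j < i$) from forcing $(\sharp)$ to be violated. Both points have been arranged in the construction: the former by the way $L_1$ is transferred to $L_2$ and then drained along the path, and the latter by the fact that $f_i(x,s)$ exceeds all $h(x,t)$ and $g_j(x,t)$ from higher priority nodes for $t \le s$, so the delayed enumeration stipulated by $(\sharp)$ never conflicts with the action of those nodes on the true path.
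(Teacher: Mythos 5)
Your proof follows essentially the same approach as the paper's (which is much terser and relies on the reader inferring the $S_{i,V}$-level argument from the one-split case): use Lemma 5.6 to stabilize the value of $Z_i$, use Lemma 5.7 and the $L_1$/$L_2$ re-enqueueing to get a node working on each $S_{i,V}$ with the final value of $Z_i$, and then check each disjunct. One small inaccuracy: when one of steps (1)--(4) of the $S_{i,V}$-strategy succeeds, you claim the resulting witness is ``permanently kept out of $B$.'' That is true for steps (2)--(4) (via Lemma 5.1), but in step (1) the witness is found already sitting in $V$ \emph{and the opposite half of the split}, i.e.\ it is \emph{in} $B$; the first disjunct $(\exists x \in V)\, x \notin Z_i^*$ still holds because $x_\alpha$ is in the other half of a disjoint split, not because it avoids $B$. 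Rephrasing that sentence as ``either $x_\alpha$ already lies in $V$ intersected with the other half of the split, or $x_\alpha$ is permanently restrained from $B$; in both cases $x_\alpha \in V$ and $x_\alpha \notin Z_i^*$'' fixes it, and the rest of the argument (in particular the use of the $f_i$/$g_i$ timing for the $(\sharp)$ case) is sound.
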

\begin{proof} If we never switch to a $Y_i$ after all of the higher priority $Z_j$'s have finished switching, we do not reset the $S_{i,V}$ strategies. If we do switch, we finish switching at some stage $s$ by Lemma 5.6. We do not reset the $S_i$ strategies again after we finish switching so there is a node working on $S_{i,V}$ for every $V$ on the true path and thus the $S_i$ requirement is satisfied.
\end{proof}

\begin{lemma}  For all $h$, $Q_h$ is satisfied.
\end{lemma}
\begin{proof} By the previous lemmas. Eventually $Z_i$'s stop switching and some $\alpha$ on the true path working on $Q_h$ must get the $\infty$ outcome, \ie there are infinitely many elements in $M_\alpha$ that are kept out of $B$ until stage $h(x,\Phi_{M_\alpha}(x)) +1$.
\end{proof}

\end{proof}

\section{Further generalizations and questions}
One way to generalize Theorem 2.3 is to look generalizations of being semilow (as semilow is equivalent to being nonspeedable). A particularly interesting generalization is that of the notion of semilow$_{1.5}$.

\begin{definition} A \re set $A$ is semilow$_{1.5}$ if and only if
\[ \{e:W_e \cap \overline{A} \text{ infinite }\} \leq_1 \text{Inf}. \]
\end{definition}

Semilow$_{1.5}$ sets occur when studying the lattice of \re sets. Maass \cite{Maass83} showed that if $A$ is cofinite then $A$ is semilow$_{1.5}$ if and only if $\mathcal{L}^*(A) \cong^{\text{eff}} \mathcal{E}^*$ (where $\mathcal{L}(A)$ is the lattice of \re supersets of $A$ and $\mathcal{E}$ is the lattice of \re sets. The $^*$ denotes that we quotient out by the finite sets).

Semilow$_{1.5}$ sets also have a characterization using complexity theoretic notions closely related to nonspeedable sets. Instead of studying the property of having just one a.e. fastest program, Bennison and Soare \cite{MR0479982} defined the notion of a \emph{type 1 c.e. complexity sequence}, which is informally a sequence of lower bounds for all running times of programs for $A$ (with some finite flexibility). They showed that a set has a type 1 c.e. complexity sequence if and only if it is semi-low$_{1.5}$.

One question to examine is whether we can replace semilow with semilow$_{1.5}$ in the statement of the main theorem. In fact, a stronger statement holds \cite{DS1993}. Maximal sets are not semilow$_{1.5}$ but have the property that if $X$ and $Y$ form a split of a maximal set and neither is recursive, then both $X$ and $Y$ are semilow$_{1.5}$. We give a brief proof for completeness.

\begin{lemma} For $B$ maximal, if $X$ and $Y$ form a split of $B$ (and neither is recursive), for every \re $W$, $W-X$ infinite if and only if $W \cap Y$ is infinite.
\end{lemma}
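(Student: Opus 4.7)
The plan is to prove the two directions of the biconditional separately. The reverse implication, that $W \cap Y$ infinite forces $W-X$ infinite, is immediate: since $X \cap Y = \emptyset$, we have $Y \subseteq \overline{X}$, and hence $W \cap Y \subseteq W - X$.

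For the forward direction I would argue the contrapositive. Because $X$ and $Y$ split $B$, one has $\overline{X} = \overline{B} \cup Y$, so $W - X = (W \cap \overline{B}) \cup (W \cap Y)$. Assuming $W \cap Y$ is finite, it therefore suffices to show that $W \cap \overline{B}$ is finite. This is where maximality of $B$ enters: applying maximality to the \re superset $W \cup B$ of $B$, either $(W \cup B) \setminus B = W \cap \overline{B}$ is finite (which is exactly what we want) or $\overline{W \cup B} = \overline{W} \cap \overline{B}$ is finite.

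The main obstacle, and the only place where the hypothesis ``neither $X$ nor $Y$ recursive'' is invoked, is ruling out the second alternative. So suppose $\overline{W} \cap \overline{B}$ is finite, \ie $\overline{B} \subseteq^\ast W$. I would then show that $W \cup X =^\ast \overline{Y}$. Note that $\overline{Y} = \overline{B} \cup X$, and a short computation gives
\[
(W \cup X) \triangle (\overline{B} \cup X) \; \subseteq \; (\overline{B}\setminus W) \cup (W \cap Y),
\]
which is a union of two finite sets by our assumptions. Hence $\overline{Y}$ agrees with the \re set $W \cup X$ modulo a finite set, so $\overline{Y}$ is \re, making $Y$ recursive. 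This contradicts the hypothesis on $Y$, so the second alternative cannot occur and the forward direction follows.
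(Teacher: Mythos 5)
Your proposal is correct and follows essentially the same route as the paper: apply maximality of $B$ to the r.e.\ superset $W \cup B$, split into the two cases ($W \cap \overline{B}$ finite vs.\ $\overline{W} \cap \overline{B}$ finite), and in the second case show that $\overline{Y}$ is r.e.\ up to a finite set, contradicting the assumption that $Y$ is not recursive. The only cosmetic difference is that you argue the contrapositive of the forward direction and phrase the last step via a symmetric-difference bound rather than writing $\overline{Y}$ directly as a union.
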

$W \cap Y$ being infinite is a $\Pi^0_2$ property so by Lemma 6.2, $X$ (and by symmetry, $Y$) is semilow$_{1.5}$.

\begin{proof} [Proof of Lemma 6.2] $(\Leftarrow)$ is immediate as $X$ and $Y$ are disjoint.
For the other direction, assume that $W-X$ is infinite for some \re $W$. By maximality, we must have $W \cap \overline{B}$ finite or $\overline{W} \cap \overline{B}$ finite. If we have $W \cap \overline{B}$ finite, then $W \cap Y$ is infinite as $W-X$ is infinite. If we have $\overline{W} \cap \overline{B}$ finite, we show that $W \cap Y$ cannot be finite by contradiction. Suppose that it were finite. Then the complement of $Y$ is equal to $X \cup W \cup (\overline{W} \cap \overline{B})$ minus the finitely many elements in $W \cap Y$. As $X$ and $W$ are \re and $\overline{W} \cap \overline{B}$ is finite, the complement of $Y$ is \re thus $Y$ is recursive, contradicting the assumption that neither $X$ nor $Y$ is recursive.
\end{proof}

The following corollary follows immediately:

\begin{cor} There is a non-semilow$_{1.5}$ set $B$ such that if $X$ and $Y$ form a split of $B$ then at least one of $X$ or $Y$ is semilow$_{1.5}$.
\end{cor}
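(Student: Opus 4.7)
The plan is to take $B$ to be any maximal set (which exists by Friedberg's theorem) and argue that it witnesses the corollary. The corollary then reduces to two tasks: verifying that a maximal set is not semilow$_{1.5}$, and verifying that every split of a maximal set consists of two semilow$_{1.5}$ halves.

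First I would observe that a maximal set $B$ cannot be semilow$_{1.5}$. The reason is that by maximality, for every r.e. $W$ exactly one of $W \cap \overline{B}$ and $\overline{W} \cap \overline{B}$ is finite, so $\{e : W_e \cap \overline{B} \text{ is infinite}\}$ is exactly the complement (modulo finite noise) of the index set of r.e.\ sets whose intersection with $\overline{B}$ is finite. Coding $\emptyset'$ into this index set via a standard construction (using the fact that $\overline{B}$ is infinite and that $B$ is coinfinite r.e.) shows this set is not $1$-reducible to $\mathrm{Inf}$, so $B$ is not semilow$_{1.5}$.

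Next I would handle the splitting side. Suppose $X, Y$ form a split of $B$. If one of $X$ or $Y$ is recursive, then it is trivially semilow$_{1.5}$ and we are done. Otherwise neither is recursive, and Lemma 6.2 applies: for every r.e.\ $W$, $W - X$ is infinite iff $W \cap Y$ is infinite. Now $W \cap Y$ being infinite is a $\Pi^0_2$ condition (uniformly in an index for $W$), since $Y$ is r.e.\ and we can ask ``for every $n$ there is some $x > n$ with $x \in W \cap Y$.'' Hence $\{e : W_e - X \text{ infinite}\} = \{e : W_e \cap \overline{X} \text{ infinite}\}$ is $\Pi^0_2$, and by the standard reduction of $\Pi^0_2$ sets to $\mathrm{Inf}$ via a padding function, it is $1$-reducible to $\mathrm{Inf}$. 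Thus $X$ is semilow$_{1.5}$, and by the symmetric argument $Y$ is semilow$_{1.5}$ as well.

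The main (and essentially only) obstacle is the bookkeeping in the first step — confirming that a maximal set really does fail to be semilow$_{1.5}$, rather than just failing to be semilow. The splitting half is immediate from Lemma 6.2 together with the trivial observation that a $\Pi^0_2$ set is $1$-reducible to $\mathrm{Inf}$, so the corollary follows at once once the non-triviality of $B$ is secured.
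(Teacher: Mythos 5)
Your overall plan coincides with the paper's: take $B$ to be a maximal set, invoke Lemma 6.2 to get that both halves of any non-trivial split of $B$ are semilow$_{1.5}$, and separately observe that $B$ itself is not semilow$_{1.5}$. The splitting half of your argument is correct and is exactly what the paper does (including the observation that a $\Pi^0_2$ predicate is $1$-reducible to $\mathrm{Inf}$ and the trivial case where one side is recursive).

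The gap is in your justification that a maximal set is not semilow$_{1.5}$. You propose to ``code $\emptyset'$ into the index set $\{e : W_e \cap \overline{B} \text{ infinite}\}$'' and conclude that it is not $1$-reducible to $\mathrm{Inf}$. That inference is invalid: $\emptyset'$ is $\Sigma^0_1$, hence $\Pi^0_2$, hence $\emptyset' \leq_1 \mathrm{Inf}$ already, so a $1$-reduction from $\emptyset'$ to the index set is perfectly compatible with the index set being $1$-reducible to $\mathrm{Inf}$. To rule out $\leq_1 \mathrm{Inf}$ you must show the index set is not $\Pi^0_2$, which requires something at least $\Sigma^0_2$-hard (and in fact one shows it is $\Sigma^0_3$-complete for maximal $B$). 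The paper avoids this entirely by citing the fact that maximal sets are not semilow$_{1.5}$ from Downey--Jockusch--Lerman--Stob rather than reproving it; your sketch as written does not establish it.
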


A further generalization of the notion of being semilow is the notion of being semilow$_2$:

\begin{definition} An \re set $B$ is semilow$_2$ if and only if $\{e:W_e \cap \overline{B} \text{ infinite}\} \leq_T \emptyset^{\prime \prime}.$
\end{definition}

We can ask whether the main theorem generalizes to semilow$_2$ sets:

\begin{quest} Is there a non-semilow$_2$ set $B$ such that if $X$ and $Y$ form a split of $B$ then at least one of $X$ or $Y$ is semilow$_2$?
\end{quest}

Work in progress suggests that the answer to Question 6.5 is yes.

We can also ask the following related question on low sets:

\begin{quest} \cite{DS1993} Is there a non-low r.e. set $B$ such that if $X$ and $Y$ form a non-trivial split of $B$, then both $X$ and $Y$ are low?
\end{quest}

Work in progress suggests that the answer to Question 6.6 is also yes.
\bibliography{ref}
\bibliographystyle{plain}
\end{document}